\documentclass[final,3p]{elsarticle}
\usepackage{lineno,hyperref}
\usepackage{amsmath}
\usepackage{amssymb}
\usepackage[linesnumbered,ruled,vlined]{algorithm2e}
\usepackage{caption}
\usepackage{mathtools}
\usepackage{changes}
\usepackage{multirow}
\usepackage{verbatim}
\usepackage{mathrsfs}
\usepackage{graphicx}
\usepackage{subcaption}
\usepackage{amsmath,amsthm,bm,mathrsfs}
\theoremstyle{plain}
\newtheorem{theorem}{Theorem}[section]

\theoremstyle{definition}

\theoremstyle{remark}

\modulolinenumbers[5]

\journal{ArXiv.org}

\begin{document}

\begin{frontmatter}

\title{A Low-rank ADI Algorithm for the Numerical Solution of Large Discrete-time Non-symmetric Algebraic Riccati Equations}

\author[uz]{Umair~Zulfiqar\corref{mycorrespondingauthor}}
\cortext[mycorrespondingauthor]{Corresponding author}
\ead{umair@yangtzeu.edu.cn}
\address[uz]{School of Electronic Information and Electrical Engineering, Yangtze University, Jingzhou, Hubei, 434023, China}

\begin{abstract}
Discrete-time non-symmetric algebraic Riccati equations (DTNAREs) arise in game-theoretic computations of Nash equilibria. Solving such equations at large scale is often computationally prohibitive. This paper develops a numerical approach for large-scale DTNAREs whose solutions are low rank. A low-rank alternating direction implicit (ADI) method is introduced that recursively constructs a low-rank stabilizing solution without explicitly solving any projected DTNARE. Through the pole-placement property of the low-rank ADI iteration, the method ensures that the implicitly solved projected DTNARE always admits a stabilizing solution. An automatic shift-generation strategy is also developed for the ADI iterations. Once an initial shift is provided, the algorithm computes the low-rank solution without further user intervention. Numerical experiments on DTNAREs with dimensions between \(10^6\) and \(10^7\) demonstrate the accuracy and efficiency of the method. The results confirm that the proposed low-rank ADI algorithm is an effective solver for large-scale DTNAREs that would otherwise be computationally prohibitive.
\end{abstract}

\begin{keyword}
ADI\sep Discrete-time\sep Low-rank\sep Non-symmetric\sep Projection\sep Rational interpolation\sep Riccati equation
\end{keyword}

\end{frontmatter}

\section{Introduction}
Game theory provides mathematical models for analyzing competitive situations in which the payoff of each participant depends on the decisions of all players. A central objective is the determination of equilibria, that is, strategy profiles that are mutually optimal for all players. Among such equilibria, the Nash equilibrium is the most widely used concept. The equilibrium that is obtained depends strongly on the information available to the players when they construct their strategies. In an open-loop information structure, players do not observe the current state and must commit in advance to a predetermined strategy, in contrast to a closed-loop structure. For infinite-horizon linear-quadratic games, computing a Nash equilibrium requires solving coupled algebraic Riccati equations, which can be reformulated as a single non-symmetric algebraic Riccati equation (NARE) \cite{abou2012matrix,dockner2000differential,ionescu1996reverse,juang1995existence}. The discrete-time non-symmetric algebraic Riccati equation (DTNARE) associated with these Nash strategies is studied in \cite{jungers2010general}. The algorithms proposed in \cite{jungers2010general} are, however, limited to small-dimensional problems. This paper is concerned with computing stabilizing solutions of large-scale DTNAREs that admit numerically low-rank solutions.

Consider the DTNARE
\begin{align}
AX\hat{A}-EX\hat{E}-AX\hat{B}\big(R+CX\hat{B}\big)^{-1}CX\hat{A}+B\hat{C}=0,\label{dtnare}
\end{align}
where \(E\in\mathbb{R}^{n\times n}\), \(A\in\mathbb{R}^{n\times n}\), \(B\in\mathbb{R}^{n\times m}\), \(C\in\mathbb{R}^{p\times n}\), \(\hat{E}\in\mathbb{R}^{\hat{n}\times \hat{n}}\), \(\hat{A}\in\mathbb{R}^{\hat{n}\times \hat{n}}\), \(\hat{B}\in\mathbb{R}^{\hat{n}\times p}\), \(\hat{C}\in\mathbb{R}^{m\times \hat{n}}\), \(R\in\mathbb{R}^{p\times p}\), and \(X\in\mathbb{R}^{n\times \hat{n}}\). The matrices \(E\), \(\hat{E}\), \(R\), and \(R+CX\hat{B}\) are assumed to be invertible. Under the assumption \(m,p\ll n,\hat{n}\), the solution of DTNARE \eqref{dtnare} is numerically low-rank; this is the main regime considered in the paper.

Introduce the gain matrices \(K\) and \(\hat{K}\) by
\begin{align}
K=AX\hat{B}\big(R+CX\hat{B}\big)^{-1},\qquad 
\hat{K}=\big(R+CX\hat{B}\big)^{-1}CX\hat{A}.
\end{align}
The corresponding closed-loop matrices \(A_{cl}\) and \(\hat{A}_{cl}\) are defined by
\begin{align}
A_{cl}=A-KC,\qquad \hat{A}_{cl}=\hat{A}-\hat{B}\hat{K}.
\end{align}
A matrix \(X_{\star}\) is called a stabilizing solution of DTNARE \eqref{dtnare} if the matrices \(E^{-1}A_{cl}\) and \(\hat{A}_{cl}\hat{E}^{-1}\) are Schur stable, that is, if all their eigenvalues lie inside the unit circle.
\section{Main Work}
Low-rank ADI methods are iterative techniques for solving large-scale matrix equations whose solutions can be represented compactly in low-rank form \cite{wachspress1988iterative,simoncini2016computational,benner2013numerical}. These methods have been extended to a broad class of linear and nonlinear matrix problems, including Lyapunov equations \cite{benner2013efficient,benner2013reformulated,zulfiqar2026new}, Sylvester equations \cite{benner2009adi,benner2014computing}, Stein equations \cite{benner2011numerical,zulfiqar2026}, continuous-time algebraic Riccati equations (CAREs) \cite{benner2018radi,bertram2024family,zulfiqar2025unified,zulfiqar2026ldl}, continuous-time nonsymmetric algebraic Riccati equations (CTNAREs) \cite{zulfiqar2026low}, and discrete-time algebraic Riccati equations (DAREs) \cite{zulfiqar2026d}. As far as is known, no low-rank ADI algorithm has yet been developed for DTNAREs of the form \eqref{dtnare}. The method proposed here approximates the solution of \eqref{dtnare} in the low-rank form
\[
X\approx V^{(k)} \tilde{X}^{(k)} (\hat{W}^{(k)})^\top,
\]
where \(V^{(k)}\in\mathbb{R}^{n\times km}\), \(\tilde{X}^{(k)}\in\mathbb{R}^{km\times km}\), \(\hat{W}^{(k)}\in\mathbb{R}^{\hat{n}\times km}\), and $k\ll n,\hat{n}$.

As demonstrated in earlier works \cite{zulfiqar2026new,zulfiqar2026,zulfiqar2025unified,zulfiqar2026ldl,zulfiqar2026low,zulfiqar2026d}, low-rank ADI algorithms—despite their different origins and developments—can be interpreted as recursive rational interpolation methods based on Petrov–Galerkin projection. The interpolation points in these methods are the mirror images of the ADI shifts. The main distinction among existing low-rank ADI solvers lies in their pole-placement behavior, as discussed below.

Let \(\{\alpha_i\}_{i=1}^{k}\) and \(\{\beta_i\}_{i=1}^{k}\) denote the ADI shift parameters used in constructing the low-rank approximation \(X\approx V^{(k)} \tilde{X}^{(k)} (\hat{W}^{(k)})^\top\). In low-rank ADI methods, the trial bases \(V^{(k)}\) and \(\hat{W}^{(k)}\) satisfy
\begin{align}
\operatorname{span}_{i=1,\dots,k}\left\{(-\alpha_i E - A)^{-1} B\right\} &\subset \mathrm{Ran}(V^{(k)}),\label{span_prop1}\\
\operatorname{span}_{i=1,\dots,k}\left\{(-\beta_i \hat{E}^\top - \hat{A}^\top)^{-1} \hat{C}^\top\right\} &\subset \mathrm{Ran}(\hat{W}^{(k)}).\label{span_prop2}
\end{align}
The residual corresponding to the approximation \(X\approx \bar{X}^{(k)}= V^{(k)} \tilde{X}^{(k)} (\hat{W}^{(k)})^\top\) is
\begin{align}
R_x^{(k)} = A\bar{X}^{(k)}\hat{A}-E\bar{X}^{(k)}\hat{E}-A\bar{X}^{(k)}\hat{B}\big(R+C\bar{X}^{(k)}\hat{B}\big)^{-1}C\bar{X}^{(k)}\hat{A}+B\hat{C}.\label{residual}
\end{align}
For suitable test bases \(W^{(k)}\) and \(\hat{V}^{(k)}\), which are not explicitly formed and satisfy
\[
(W^{(k)})^\top E V^{(k)} = I\quad \text{and}\quad (\hat{W}^{(k)})^\top \hat{E} \hat{V}^{(k)} = I,
\]
the residual produced by a general low-rank ADI method satisfies the Petrov–Galerkin condition
\[
(W^{(k)})^\top R_x^{(k)} \hat{V}^{(k)} = 0.
\]

Equivalently, \(\tilde{X}^{(k)}\) satisfies the projected DTNARE
\begin{align}
A_r^{(k)}\tilde{X}^{(k)}\hat{A}_r^{(k)}-\tilde{X}^{(k)}-A_r^{(k)}\tilde{X}^{(k)}\hat{B}_r^{(k)}\big(R+C_r^{(k)}\tilde{X}^{(k)}\hat{B}_r^{(k)}\big)^{-1}C_r^{(k)}\tilde{X}^{(k)}\hat{A}_r^{(k)}+B_r^{(k)}\hat{C}_r^{(k)}=0,\label{proj_dtnare}
\end{align}
where the projected matrices are
\begin{align}
(W^{(k)})^\top E V^{(k)}&=I,\quad A_r^{(k)}=(W^{(k)})^\top A V^{(k)}, \quad B_r^{(k)}=(W^{(k)})^\top B,\quad C_r^{(k)}=C V^{(k)},\nonumber\\
(\hat{W}^{(k)})^\top \hat{E} \hat{V}^{(k)}&=I,\quad \hat{A}_r^{(k)}=(\hat{W}^{(k)})^\top \hat{A} \hat{V}^{(k)}, \quad \hat{B}_r^{(k)}=(\hat{W}^{(k)})^\top \hat{B},\quad \hat{C}_r^{(k)}=\hat{C} \hat{V}^{(k)}.
\end{align}
Define the full-order and reduced-order transfer functions
\begin{align}
G(z)&=C(zE-A)^{-1}B,& \hat{G}(z)&=\hat{C}(z\hat{E}-\hat{A})^{-1}\hat{B},\nonumber\\
G_r^{(k)}(z)&=C_r^{(k)}(zI-A_r^{(k)})^{-1}B_r^{(k)},& \hat{G}_r^{(k)}(z)&=\hat{C}_r^{(k)}(zI-\hat{A}_r^{(k)})^{-1}\hat{B}_r^{(k)}.\nonumber
\end{align}

The subspace conditions \eqref{span_prop1} and \eqref{span_prop2} imply the interpolation conditions
\begin{align}
G(-\alpha_i)=G_r^{(k)}(-\alpha_i)\quad \text{and} \quad  \hat{G}(-\overline{\beta}_i)=\hat{G}_r^{(k)}(-\overline{\beta}_i),\label{int_cond}
\end{align}
for \(i=1,\dots,k\) \cite{beattie2017chapter}.

The projected gain matrices are
\begin{align}
K_r^{(k)}&=A_r^{(k)}\tilde{X}^{(k)}\hat{B}_r^{(k)}\big(R+C_r^{(k)}\tilde{X}^{(k)}\hat{B}_r^{(k)}\big)^{-1},\quad 
\hat{K}_r^{(k)}=\big(R+C_r^{(k)}\tilde{X}^{(k)}\hat{B}_r^{(k)}\big)^{-1}C_r^{(k)}\tilde{X}^{(k)}\hat{A}_r^{(k)}.
\end{align}
The corresponding projected closed-loop matrices are
\begin{align}
A_{r,cl}^{(k)}=A_r^{(k)}-K_r^{(k)}C_r^{(k)},\quad \hat{A}_{r,cl}^{(k)}=\hat{A}_r^{(k)}-\hat{B}_r^{(k)}\hat{K}_r^{(k)}.
\end{align}
For the proposed low-rank ADI solver for DTNARE \eqref{dtnare}, the desired pole-placement property is that the eigenvalues of \(A_{r,cl}^{(k)}\) and \(\hat{A}_{r,cl}^{(k)}\) lie inside the unit circle. This ensures that \(\tilde{X}^{(k)}\) is the stabilizing solution of the projected DTNARE \eqref{proj_dtnare}.

As noted in \cite{zulfiqar2025unified}, another important property of existing low-rank ADI methods is that the solutions of the projected matrix equations have a block-diagonal structure, with each block associated with an ADI shift pair $(\alpha_i,\beta_i)$. Since these blocks can be computed via analytical expressions, the projected matrix equations are solved implicitly rather than directly. Existing low-rank ADI algorithms construct the low-rank solution recursively by adding new terms while keeping previously computed terms unchanged. The main computational work consists of shifted linear solves and elementary matrix operations. The following subsection develops a low-rank ADI solver for the DTNARE \eqref{dtnare} that retains these structural and computational properties.
\subsection{A Low-rank ADI Solver}
Suppose that the shifts \(\alpha_i\) and \(\beta_i\) are grouped into the following four cases, following the same grouping used in the ADI method for Sylvester equations \cite{benner2014computing}:
\begin{enumerate}
\item Case I: \(\mathrm{Im}(\alpha_i)=0\) and \(\mathrm{Im}(\beta_i)=0\).
\item Case II: \(\mathrm{Im}(\alpha_i)\neq 0\), \(\alpha_{i+1}=\overline{\alpha}_i\), \(\mathrm{Im}(\beta_i)\neq0\), and \(\beta_{i+1}=\overline{\beta}_i\).
\item Case III: \(\mathrm{Im}(\alpha_i)\neq 0\), \(\alpha_{i+1}=\overline{\alpha}_i\), \(\mathrm{Im}(\beta_i)=0\), and \(\mathrm{Im}(\beta_{i+1})=0\).
\item Case IV: \(\mathrm{Im}(\alpha_i)=0\), \(\mathrm{Im}(\alpha_{i+1})=0\), \(\mathrm{Im}(\beta_i)\neq0\), and \(\beta_{i+1}=\overline{\beta}_i\).
\end{enumerate}

Based on this grouping, define the matrices \(s_v^{(i)}\), \(s_w^{(i)}\), and \(l^{(i)}\) as follows:
\begin{enumerate}
\item Case I: 
\begin{align}
s_v^{(i)} = -\alpha_i I_m,\quad s_w^{(i)} = -\beta_i I_m,\quad l^{(i)} = -I_m.\label{sl1}
\end{align}
\item Case II: 
\begin{align}
s_v^{(i)} = \begin{bmatrix}
-\mathrm{Re}(\alpha_i)I_m & -\mathrm{Im}(\alpha_i)I_m \\
\mathrm{Im}(\alpha_i)I_m & -\mathrm{Re}(\alpha_i)I_m
\end{bmatrix}\quad
s_w^{(i)} = \begin{bmatrix}
-\mathrm{Re}(\beta_i)I_m & -\mathrm{Im}(\beta_i)I_m \\
\mathrm{Im}(\beta_i)I_m & -\mathrm{Re}(\beta_i)I_m
\end{bmatrix},\quad
l^{(i)} = \begin{bmatrix} -I_m & 0 \end{bmatrix}.\label{sl2}
\end{align}
\item Case III: 
\begin{align}
s_v^{(i)} = \begin{bmatrix}
-\mathrm{Re}(\alpha_i)I_m & -\mathrm{Im}(\alpha_i)I_m \\
\mathrm{Im}(\alpha_i)I_m & -\mathrm{Re}(\alpha_i)I_m
\end{bmatrix},\quad
s_w^{(i)} = \begin{bmatrix}
-\beta_i I_m & I_m \\
0 & -\beta_{i+1} I_m
\end{bmatrix},\quad
l^{(i)} &= \begin{bmatrix} -I_m & 0 \end{bmatrix}.\label{sl3}
\end{align}
\item Case IV: 
\begin{align}
s_v^{(i)} &= \begin{bmatrix}
-\alpha_i I_m & I_m \\
0 & -\alpha_{i+1} I_m
\end{bmatrix},\quad
s_w^{(i)} = \begin{bmatrix}
-\mathrm{Re}(\beta_i)I_m & -\mathrm{Im}(\beta_i)I_m \\
\mathrm{Im}(\beta_i)I_m & -\mathrm{Re}(\beta_i)I_m
\end{bmatrix},\quad
l^{(i)} = \begin{bmatrix} -I_m & 0 \end{bmatrix}.\label{sl4}
\end{align}
\end{enumerate}
Define \(S_v^{(i)}\), \((S_v^{(i)})^{-1}\), \(L^{(i)}\), \(S_w^{(i)}\), \((S_w^{(i)})^{-1}\), \(\tilde{X}^{(i)}\), \(V^{(i)}\), \(\hat{W}^{(i)}\), \(B_{\perp}^{(i)}\), and \(\hat{C}_{\perp}^{(i)}\) by
\begin{align}
S_v^{(i)}&=\begin{bmatrix}S_v^{(i-1)}&S_{v,12}^{(i)}\\\mathbf{0}&s_v^{(i)}\end{bmatrix},\quad (S_v^{(i)})^{-1}=\begin{bmatrix}(S_v^{(i-1)})^{-1}&-(S_v^{(i-1)})^{-1}S_{v,12}^{(i)}(s_v^{(i)})^{-1}\\\mathbf{0}&(s_v^{(i)})^{-1}\end{bmatrix},\quad L^{(i)}=\begin{bmatrix}L^{(i-1)}&l^{(i)}\end{bmatrix},\nonumber\\
S_w^{(i)}&=\begin{bmatrix}S_w^{(i-1)}&S_{w,12}^{(i)}\\\mathbf{0}&s_w^{(i)}\end{bmatrix},\quad (S_w^{(i)})^{-1}=\begin{bmatrix}(S_w^{(i-1)})^{-1}&-(S_w^{(i-1)})^{-1}S_{w,12}^{(i)}(s_w^{(i)})^{-1}\\\mathbf{0}&(s_w^{(i)})^{-1}\end{bmatrix},\quad \tilde{X}^{(i)}=\begin{bmatrix}\tilde{X}^{(i-1)}&\mathbf{0}\\\mathbf{0}&\tilde{x}_i\end{bmatrix},\nonumber\\
V^{(i)}&=\begin{bmatrix}V^{(i-1)}&v_i\end{bmatrix},\quad \hat{W}^{(i)}=\begin{bmatrix}\hat{W}^{(i-1)}&\hat{w}_i\end{bmatrix},\quad B_{\perp}^{(i)}=B-EV^{(i)}\tilde{X}^{(i)}(S_w^{(i)})^{-\top}(L^{(i)})^\top,\nonumber\\ \hat{C}_{\perp}^{(i)}&=\hat{C}-L^{(i)}(S_v^{(i)})^{-1}\tilde{X}^{(i)}(\hat{W}^{(i)})^\top \hat{E},\quad K_v^{(i)}=EV^{(i)}\tilde{X}^{(i)}(S_w^{(i)})^{-\top}(\hat{W}^{(i)})^\top\hat{B}R^{-1},\nonumber\\
\hat{K}_w^{(i)}&=R^{-1}CV^{(i)}(S_v^{(i)})^{-1}\tilde{X}^{(i)}(\hat{W}^{(i)})^\top\hat{E},\label{big_S_L}
\end{align}
where
\begin{align}
S_{v,12}^{(i)}=\tilde{X}^{(i-1)}(S_w^{(i-1)})^{-\top}\Big((L^{(i-1)})^\top l^{(i)}+(\hat{W}^{(i-1)})^\top \hat{B}R^{-1}C v_i\Big),\label{Sv12}\\
S_{w,12}^{(i)}=(\tilde{X}^{(i-1)})^\top(S_v^{(i-1)})^{-\top}\Big((L^{(i-1)})^\top l^{(i)}+(V^{(i-1)})^\top C^\top R^{-\top}\hat{B}^\top \hat{w}_i\Big),\label{Sw12}\\
(s_w^{(i)})^\top (\tilde{x}_i)^{-1} s_v^{(i)}- (\tilde{x}_i)^{-1}-(l^{(i)})^\top l^{(i)}-(\hat{w}_i)^\top \hat{B}R^{-1}C v_i+(S_{w,12}^{(i)})^\top (\tilde{X}^{(i-1)})^{-1}S_{v,12}^{(i)}=0,\label{small_x_r}\\
\big(A-K_v^{(i-1)}C\big) v_i-E v_is_v^{(i)}+B_{\perp}^{(i-1)}l^{(i)}=0,\label{small_v_r}\\
\big(\hat{A}-\hat{B}\hat{K}_w^{(i-1)}\big)^\top \hat{w}_i-\hat{E}^\top \hat{w}_is_w^{(i)}+(\hat{C}_{\perp}^{(i-1)})^\top l^{(i)}=0.\label{small_w_r}
\end{align}

With these definitions, one can verify that \(V^{(i)}\), \(\hat{W}^{(i)}\), and \((\tilde{X}^{(i)})^{-1}\) satisfy the following Sylvester equations:
\begin{align}
A V^{(i)} - E V^{(i)} S_v^{(i)} + B L^{(i)} &= 0, \label{Sylv_V} \\[4pt]
\hat{A}^\top \hat{W}^{(i)} - \hat{E}^\top \hat{W}^{(i)} S_w^{(i)} + \hat{C}^\top L^{(i)} &= 0, \label{Sylv_W} \\[4pt]
(S_w^{(i)})^\top (\tilde{X}^{(i)})^{-1} S_v^{(i)}- (\tilde{X}^{(i)})^{-1}-(L^{(i)})^\top L^{(i)}-(\hat{W}^{(i)})^\top \hat{B}R^{-1}C V^{(i)}&=0. \label{Sylv_X}
\end{align}
Because of the block triangular structure of \(S_v^{(i)}\) and \(S_w^{(i)}\), their eigenvalues are \((-\alpha_1,\dots,-\alpha_i)\) and \((-\beta_1,\dots,-\beta_i)\), respectively, each with multiplicity \(m\). By the relationship between Sylvester equations and rational Krylov subspaces established in \cite{gallivan2004sylvester}, the bases \(V^{(i)}\) and \(\hat{W}^{(i)}\) satisfy properties \eqref{span_prop1} and \eqref{span_prop2}. Hence, the interpolation conditions \eqref{int_cond} are satisfied.

Premultiplying \eqref{Sylv_V} and \eqref{Sylv_W} by \((W^{(i)})^\top\) and \((\hat{V}^{(i)})^\top\), respectively, gives
\[
A_r^{(i)} = S_v^{(i)} - B_r^{(i)} L^{(i)} \qquad \text{and} \qquad \hat{A}_r^{(i)} = (S_w^{(i)})^\top - (L^{(i)})^\top \hat{C}_r^{(i)}.
\]
Moreover, the interpolation conditions \eqref{int_cond} do not depend on the particular choices of \(W^{(i)}\) and \(\hat{V}^{(i)}\). Thus, the parameters \(B_r^{(i)}\) and \(\hat{C}_r^{(i)}\) may be chosen freely without affecting the interpolation conditions \eqref{int_cond}, provided that \((S_v^{(i)},L^{(i)})\) and \((S_w^{(i)},L^{(i)})\) are observable. For more details, see \cite{wolfthesis,panzerthesis,astolfi2010model}.

The next theorem gives particular choices of \(B_r^{(i)}\) and \(\hat{C}_r^{(i)}\) for which \(\tilde{X}^{(i)}\) is a stabilizing solution of the projected DTNARE \eqref{proj_dtnare}.
\begin{theorem}\label{th1}
Suppose that the ADI shifts \(\alpha_i\) and \(\beta_i\) lie outside the unit circle, that is, \(|\alpha_i|>1\) and \(|\beta_i|>1\). Let \(V^{(i)}\) and \(\hat{W}^{(i)}\) satisfy the Sylvester equations \eqref{Sylv_V} and \eqref{Sylv_W}, respectively, with \(s_v^{(i)}\), \(s_w^{(i)}\), \(l^{(i)}\), \(\tilde{x}_i\), \(v_i\), \(\hat{w}_i\), \(S_v^{(i)}\), \(S_w^{(i)}\), \(L^{(i)}\), \(S_{v,12}^{(i)}\), \(S_{w,12}^{(i)}\), \(B_{\perp}^{(i)}\), \(\hat{C}_{\perp}^{(i)}\), and \(\tilde{X}^{(i)}\) defined in \eqref{sl1}--\eqref{Sylv_X}. Assume that the pairs \((S_v^{(i)},L^{(i)})\) and \((S_w^{(i)},L^{(i)})\) are observable, and that the matrices \(\tilde{X}^{(i)}\) and \(R+C_r^{(i)} \tilde{X}^{(i)} \hat{B}_r^{(i)}\) are invertible. Suppose further that there exist matrices \(W^{(i)}\) and \(\hat{V}^{(i)}\) satisfying
\[
(W^{(i)})^\top E V^{(i)}=I,\qquad (\hat{W}^{(i)})^\top \hat{E}\hat{V}^{(i)}=I,
\]
so that
\begin{align}
A_r^{(i)} &= (W^{(i)})^\top A V^{(i)} = S_v^{(i)}-B_r^{(i)}L^{(i)},\nonumber\\
B_r^{(i)}&=(W^{(i)})^\top B,\quad C_r^{(i)} =CV^{(i)}=\begin{bmatrix}C_r^{(i-1)}&c_i\end{bmatrix},\nonumber\\
\hat{A}_r^{(i)} &= (\hat{W}^{(i)})^\top \hat{A} \hat{V}^{(i)} = (S_w^{(i)})^\top-(L^{(i)})^\top\hat{C}_r^{(i)},\nonumber\\
\hat{B}_r^{(i)}&=(\hat{W}^{(i)})^\top \hat{B}=\begin{bmatrix}\hat{B}_r^{(i-1)}\\\hat{b}_i\end{bmatrix}, \quad \hat{C}_r^{(i)} =\hat{C}\hat{V}^{(i)}.\label{expand_1}
\end{align}
If the free parameters \(B_r^{(i)}\) and \(\hat{C}_r^{(i)}\) are selected as
\begin{align}
B_r^{(i)}&=\begin{bmatrix}B_r^{(i-1)}\\b_i\end{bmatrix}=\tilde{X}^{(i)}(S_w^{(i)})^{-\top}(L^{(i)})^\top,\nonumber\\
\hat{C}_r^{(i)} &= \begin{bmatrix} \hat{C}_r^{(i-1)}& \hat{c}_i \end{bmatrix} = L^{(i)}\big( S_v^{(i)}\big)^{-1}\tilde{X}^{(i)},\label{free_par}
\end{align}
where
\begin{align}
b_i&=\tilde{x}_i(s_w^{(i)})^{-\top}\big((l^{(i)})^\top-(S_{w,12}^{(i)})^\top(\tilde{X}^{(i-1)})^{-1}B_r^{(i-1)}\big),\nonumber\\ \hat{c}_i&=\big(l^{(i)}-\hat{C}_r^{(i-1)}(\tilde{X}^{(i-1)})^{-1}S_{v,12}^{(i)}\big)(s_v^{(i)})^{-1}\tilde{x}_i,\label{bch}
\end{align}
then the following statements hold:
\begin{enumerate}
  \item \(\tilde{X}^{(i)}\) is a stabilizing solution of the projected DTNARE \eqref{proj_dtnare}. The closed-loop matrices
  \begin{align}
 A_{r,cl}^{(i)}&=A_r^{(i)}-A_r^{(i)}\tilde{X}^{(i)}\hat{B}_r^{(i)}\big(R+C_r^{(i)}\tilde{X}^{(i)}\hat{B}_r^{(i)}\big)^{-1}C_r^{(i)},\nonumber\\
 \hat{A}_{r,cl}^{(i)}&=\hat{A}_r^{(i)}-\hat{B}_r^{(i)}\big(R+C_r^{(i)}\tilde{X}^{(i)}\hat{B}_r^{(i)}\big)^{-1}C_r^{(i)}\tilde{X}^{(i)}\hat{A}_r^{(i)}
  \end{align}
  have eigenvalues \(-\frac{1}{\beta_1},\dots,-\frac{1}{\beta_i}\) and \(-\frac{1}{\alpha_1},\dots,-\frac{1}{\alpha_i}\), respectively, each with multiplicity \(m\).
  \item The residual \(R_x^{(i)}\), which satisfies the Petrov–Galerkin projection condition
  \[
  (W^{(i)})^\top R_x^{(i)} \hat{V}^{(i)} = 0,
  \]
  is given by
  \[
  R_x^{(i)}=B_{\perp}^{(i)}\big(M^{(i)}\big)^{-1} \hat{C}_{\perp}^{(i)},
  \]
  where
  \[
  M^{(i)}=I_{m}-\hat{C}_r^{(i)}(\tilde{X}^{(i)})^{-1}B_r^{(i)}=M^{(i-1)}-\hat{c}_i(\tilde{x}_i)^{-1}b_i,
  \]
  with \(M^{(0)}=I_{m}\).
  \item \(V^{(i)}\) and \(\hat{W}^{(i)}\) satisfy the Sylvester equations
  \begin{align}
    A V^{(i)} - E V^{(i)} A_r^{(i)} + B_{\perp}^{(i)} L^{(i)} &= 0,\label{Sylv_V2}\\
  \hat{A}^\top \hat{W}^{(i)} - \hat{E}^\top \hat{W}^{(i)} (\hat{A}_r^{(i)})^\top + (\hat{C}_{\perp}^{(i)})^\top L^{(i)} &= 0.\label{Sylv_W2}
  \end{align}
\end{enumerate}
\end{theorem}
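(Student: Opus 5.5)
The plan is to work entirely with the projected quantities and the Sylvester-type identity \eqref{Sylv_X}, writing $Y=(\tilde{X}^{(i)})^{-1}$, $S_v=S_v^{(i)}$, $S_w=S_w^{(i)}$, $L=L^{(i)}$.

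\textbf{Step 1: closed-form expressions.} With the choice \eqref{free_par} we have $B_r=\tilde{X}S_w^{-\top}L^\top$ and $\hat{C}_r=LS_v^{-1}\tilde{X}$. Hence
\[
A_r=S_v-\tilde{X}S_w^{-\top}L^\top L, \qquad \hat{A}_r=S_w^\top-L^\top L S_v^{-1}\tilde{X}.
\]
Next multiply \eqref{Sylv_X} on the left by $\tilde{X}S_w^{-\top}$ and on the right by $S_v^{-1}\tilde{X}$. This expresses $\tilde{X}S_w^{-\top}YS_v^{-1}\tilde{X}$ and related products through $\tilde{X}$, $L^\top L$ and $\hat{W}^\top\hat{B}R^{-1}CV=\hat{B}_rR^{-1}C_r$.

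\textbf{Step 2: verify the projected DTNARE and the closed loops.} Substitute these expressions into \eqref{proj_dtnare}. The strategy is to show first that
\[
A_{r,cl}=\tilde{X}S_w^{-\top}Y \quad\text{and}\quad \hat{A}_{r,cl}=YS_v^{-1}\tilde{X}.
\]
To do this, use the push-through identity $A_r\tilde{X}\hat{B}_r(R+C_r\tilde{X}\hat{B}_r)^{-1}=A_r(I+\tilde{X}\hat{B}_rR^{-1}C_r)^{-1}\tilde{X}\hat{B}_rR^{-1}$. This gives $A_{r,cl}=A_r(I+\tilde{X}\hat{B}_rR^{-1}C_r)^{-1}$. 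The claim $A_{r,cl}=\tilde{X}S_w^{-\top}Y$ is then equivalent to
\[
A_r=\tilde{X}S_w^{-\top}Y\big(I+\tilde{X}\hat{B}_rR^{-1}C_r\big)=\tilde{X}S_w^{-\top}\big(Y+\hat{B}_rR^{-1}C_r\big).
\]
By \eqref{Sylv_X}, $Y+\hat{B}_rR^{-1}C_r=S_w^\top YS_v-L^\top L$, so the right-hand side equals $S_v-\tilde{X}S_w^{-\top}L^\top L=A_r$, as required. The symmetric computation handles $\hat{A}_{r,cl}$.

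The DTNARE then takes the form $A_{r,cl}\tilde{X}\hat{A}_r-\tilde{X}+B_r\hat{C}_r=0$. Substituting $A_{r,cl}\tilde{X}=\tilde{X}S_w^{-\top}$ together with the formulas for $\hat{A}_r$, $B_r$ and $\hat{C}_r$ gives
\[
\tilde{X}S_w^{-\top}\big(S_w^\top-L^\top LS_v^{-1}\tilde{X}\big)-\tilde{X}+\tilde{X}S_w^{-\top}L^\top LS_v^{-1}\tilde{X}=0,
\]
which holds identically.

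\textbf{Step 3: the spectra.} Since $A_{r,cl}=\tilde{X}S_w^{-\top}\tilde{X}^{-1}$ is similar to $S_w^{-\top}$, its eigenvalues are $-1/\beta_j$. Likewise $\hat{A}_{r,cl}$ is similar to $S_v^{-1}$, with eigenvalues $-1/\alpha_j$. Because $|\alpha_j|,|\beta_j|>1$, both closed-loop matrices are Schur stable, which proves statement 1. The block formulas \eqref{bch} follow from the block-triangular inverses in \eqref{big_S_L} together with $\tilde{X}^{(i)}=\mathrm{diag}(\tilde{X}^{(i-1)},\tilde{x}_i)$. Consistency with the recursive definitions \eqref{Sv12}--\eqref{small_x_r} is a routine block comparison.

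\textbf{Step 4: statement 3.} We have $AV-EVA_r+B_\perp L=AV-EVS_v+EVB_rL+BL-EVB_rL$, which vanishes by \eqref{Sylv_V}, since $B_\perp=B-EVB_r$. The same computation with \eqref{Sylv_W} gives \eqref{Sylv_W2}.

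\textbf{Step 5: the residual (main obstacle).} I would write $R_x=A\bar X\hat{A}-E\bar X\hat{E}-A\bar X\hat{B}(R+C\bar X\hat{B})^{-1}C\bar X\hat{A}+B\hat{C}$ with $\bar X=V\tilde{X}\hat{W}^\top$. Using \eqref{Sylv_V2} and \eqref{Sylv_W2}, replace $AV$ by $EVA_r-B_\perp L$ and $\hat{W}^\top\hat{A}$ by $\hat{A}_r\hat{W}^\top\hat{E}-L^\top\hat{C}_\perp$. Also note $C\bar X\hat{B}=C_r\tilde{X}\hat{B}_r$.

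After expansion, the terms with $EV(\cdot)\hat{W}^\top\hat{E}$ collapse, by the projected DTNARE from Step 2, together with the $EVB_r\hat{C}_r\hat{W}^\top\hat{E}$ part of $B\hat{C}$. What remains is a bilinear expression of the form $B_\perp\,\Phi\,\hat{C}_\perp$. The cross terms are linear in $B_\perp$ or $\hat{C}_\perp$; they must be absorbed using the identities $A_{r,cl}\tilde{X}=\tilde{X}S_w^{-\top}$, $L S_v^{-1}\tilde{X}=\hat{C}_r$, and $\tilde{X}S_w^{-\top}L^\top=B_r$.

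The hard part is identifying $\Phi=(I-\hat{C}_rY B_r)^{-1}$, which I would obtain through a Sherman–Morrison–Woodbury argument on the Riccati gain term. I expect this bookkeeping to be the main obstacle, and I would try to organise it by first computing $(R+C_r\tilde{X}\hat{B}_r)^{-1}$ via the push-through identity. The recursion $M^{(i)}=M^{(i-1)}-\hat{c}_i\tilde{x}_i^{-1}b_i$ follows directly from the block-diagonal structure of $Y$. Finally, the Petrov–Galerkin condition holds because $W^\top B_\perp=B_r-B_r=0$.
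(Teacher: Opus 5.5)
Your Steps 1--4 reproduce the paper's proof. The closed-loop identities $A_{r,cl}=\tilde{X}S_w^{-\top}Y$ and $\hat{A}_{r,cl}=YS_v^{-1}\tilde{X}$ come from \eqref{Sylv_X} and the push-through/Woodbury identity, exactly as in the paper. The verification of \eqref{proj_dtnare}, the spectra argument, and the one-line derivation of \eqref{Sylv_V2} are also the paper's. Step 5 is set up as the paper does it. Splitting $B\hat{C}=(B_\perp+EVB_r)(\hat{C}_\perp+\hat{C}_r\hat{W}^\top\hat{E})$ gives four brackets, and the $EV(\cdot)\hat{W}^\top\hat{E}$ bracket is the projected DTNARE. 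The two cross brackets reduce to $-A_{r,cl}\tilde{X}L^\top+B_r$ and $-L\tilde{X}\hat{A}_{r,cl}+\hat{C}_r$, which vanish by your identities. Your Petrov--Galerkin argument via $W^\top B_\perp=B_r-B_r=0$ is an equally valid alternative to the paper's $\hat{C}_\perp\hat{V}=0$.

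What you leave open is the fourth bracket, and your stated plan for it does not yet contain the ingredient that closes it. Woodbury applied to the gain term only gives $\Phi=I_m+L\big(Y+\hat{B}_rR^{-1}C_r\big)^{-1}L^\top$. This contains neither $S_v$ nor $S_w$, so it cannot be matched with $M^{(i)}=I_m-\hat{C}_rYB_r$ without further input. Computing $(R+C_r\tilde{X}\hat{B}_r)^{-1}$ explicitly is not needed. Instead, use \eqref{Sylv_X} a second time, $Y+\hat{B}_rR^{-1}C_r=S_w^\top YS_v-L^\top L$; this matrix is invertible because $R+C_r\tilde{X}\hat{B}_r$ is. Then apply Woodbury once more with $N=S_w^\top YS_v$:
\[
\Phi=I_m+L\big(N-L^\top L\big)^{-1}L^\top=\big(I_m-LN^{-1}L^\top\big)^{-1}=\big(I_m-\hat{C}_rYB_r\big)^{-1}=\big(M^{(i)}\big)^{-1},
\]
since $LN^{-1}L^\top=LS_v^{-1}\tilde{X}S_w^{-\top}L^\top=\hat{C}_rYB_r$. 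The paper reaches the same conclusion by direct multiplication: it checks $M^{(i)}\Phi=I_m$ using $A_r\tilde{X}\big(I+\hat{B}_rR^{-1}C_r\tilde{X}\big)^{-1}=\tilde{X}S_w^{-\top}$ together with $A_r=S_v-B_rL$. The double-Woodbury version is slightly more transparent. With this step filled in, your proof is complete.
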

\begin{proof}
The proof is provided in the Appendix.
\end{proof}
\subsection{Algorithm}
This subsection describes a recursive implementation of the low-rank solver developed in the previous subsection.

The matrices \(K_v^{(i)}\) and \(\hat{K}_w^{(i)}\) can be updated recursively as
\begin{align*}
K_v^{(i)}&=K_v^{(i-1)}+Ev_i\tilde{x}_i(s_w^{(i)})^{-\top}\Big(\hat{b}_i-\big(S_{w,12}^{(i)}\big)^\top \big(S_w^{(i-1)}\big)^{-\top}\hat{B}_{r}^{(i-1)}\Big)R^{-1},\\
\hat{K}_w^{(i)}&=\hat{K}_w^{(i-1)}+R^{-1}\Big(c_i-C_r^{(i-1)}\big(S_v^{(i-1)}\big)^{-1}S_{v,12}^{(i)}\Big)(s_v^{(i)})^{-1}\tilde{x}_i(\hat{w}_i)^\top \hat{E},
\end{align*}
with \(K_v^{(0)}=\mathbf{0}\) and \(\hat{K}_w^{(0)}=\mathbf{0}\).

The matrices \(B_{\perp}^{(i)}\) and \(\hat{C}_{\perp}^{(i)}\) can be updated recursively by
\[
B_{\perp}^{(i)}=B_{\perp}^{(i-1)}-Ev_ib_i\quad \text{and}\quad \hat{C}_{\perp}^{(i)}=\hat{C}_{\perp}^{(i-1)}-\hat{c}_i(\hat{w}_i)^\top \hat{E},
\]
with \(B_{\perp}^{(0)}=B\) and \(\hat{C}_{\perp}^{(0)}=\hat{C}\).

The new basis blocks \(v_i\) and \(\hat{w}_i\) are obtained from the following shifted linear systems.

For Case I,
\begin{align}
\Big(A-K_v^{(i-1)}C+\alpha_i E\Big)v_i=B_{\perp}^{(i-1)}\quad \text{and}\quad \Big(\hat{A}^\top-\big(\hat{K}_w^{(i-1)}\big)^\top\hat{B}^\top+\beta_i \hat{E}^\top\Big)\hat{w}_i=\big(\hat{C}_{\perp}^{(i-1)}\big)^\top.\label{vw_1}
\end{align}

For Case II, let
\[
v_i=\begin{bmatrix}\mathrm{Re}(\tilde{v}_i)&\mathrm{Im}(\tilde{v}_i)\end{bmatrix},\qquad
\hat{w}_i=\begin{bmatrix}\mathrm{Re}(\tilde{w}_i)&\mathrm{Im}(\tilde{w}_i)\end{bmatrix},
\]
where
\begin{align}
\Big(A-K_v^{(i-1)}C+\alpha_i E\Big)\tilde{v}_i=B_{\perp}^{(i-1)}\quad \text{and}\quad \Big(\hat{A}^\top-\big(\hat{K}_w^{(i-1)}\big)^\top\hat{B}^\top+\beta_i \hat{E}^\top\Big)\tilde{w}_i=\big(\hat{C}_{\perp}^{(i-1)}\big)^\top.\label{vw_2}
\end{align}

For Case III, let
\[
v_i=\begin{bmatrix}\mathrm{Re}(\tilde{v}_i)&\mathrm{Im}(\tilde{v}_i)\end{bmatrix},\qquad
\hat{w}_i=\begin{bmatrix}\tilde{w}_i&\tilde{w}_{i+1}\end{bmatrix},
\]
where
\begin{align}
\Big(A-K_v^{(i-1)}C+\alpha_i E\Big)\tilde{v}_i&=B_{\perp}^{(i-1)},\\
\Big(\hat{A}^\top-\big(\hat{K}_w^{(i-1)}\big)^\top\hat{B}^\top+\beta_i \hat{E}^\top\Big)\tilde{w}_i&=\big(\hat{C}_{\perp}^{(i-1)}\big)^\top,\\
\Big(\hat{A}^\top-\big(\hat{K}_w^{(i-1)}\big)^\top\hat{B}^\top+\beta_{i+1} \hat{E}^\top\Big)\tilde{w}_{i+1}&=\hat{E}^\top \tilde{w}_i.\label{vw_3}
\end{align}

For Case IV, let
\[
v_i=\begin{bmatrix}\tilde{v}_i&\tilde{v}_{i+1}\end{bmatrix},\qquad
\hat{w}_i=\begin{bmatrix}\mathrm{Re}(\tilde{w}_i)&\mathrm{Im}(\tilde{w}_i)\end{bmatrix},
\]
where
\begin{align}
\Big(A-K_v^{(i-1)}C+\alpha_i E\Big)\tilde{v}_i&=B_{\perp}^{(i-1)},\\
\Big(A-K_v^{(i-1)}C+\alpha_{i+1} E\Big)\tilde{v}_{i+1}&=E\tilde{v}_i,\\
\Big(\hat{A}^\top-\big(\hat{K}_w^{(i-1)}\big)^\top\hat{B}^\top+\beta_i \hat{E}^\top\Big)\tilde{w}_i&=\big(\hat{C}_{\perp}^{(i-1)}\big)^\top.\label{vw_4}
\end{align}
These shifted linear systems can be solved efficiently using the Sherman–Morrison–Woodbury (SMW) formula \cite{golub2013matrix}, as done in related works \cite{benner2018radi,zulfiqar2026ldl,zulfiqar2026d}.

The matrix
\[
G^{(i)}=R+CV^{(i)}\tilde{X}^{(i)}\big(\hat{W}^{(i)}\big)^\top \hat{B}
\]
can be updated recursively by
\[
G^{(i)}=G^{(i-1)}+c_i\tilde{x}_i\hat{b}_i,
\]
with \(G^{(0)}=R\).

The auxiliary matrices
\[
K_1^{(i)}=AV^{(i)}\tilde{X}^{(i)}\big(\hat{W}^{(i)}\big)^\top\hat{B},\qquad
\hat{K}_1^{(i)}=CV^{(i)}\tilde{X}^{(i)}\big(\hat{W}^{(i)}\big)^\top\hat{A}
\]
satisfy
\[
K_1^{(i)}=K_1^{(i-1)}+Av_i\tilde{x}_i\hat{b}_i\quad \text{and}\quad \hat{K}_1^{(i)}=\hat{K}_1^{(i-1)}+c_i\tilde{x}_i(\hat{w}_i)^\top\hat{A}. 
\]
The gain matrices \(K\) and \(\hat{K}\) can then be approximated by
\[
K\approx K_1^{(i)}\big(G^{(i)}\big)^{-1},\qquad \hat{K}\approx \big(G^{(i)}\big)^{-1}\hat{K}_1^{(i)}.
\]
Instead of solving the Stein equation \eqref{small_x_r} explicitly to obtain \((\tilde{x}_i)^{-1}\), it can be evaluated using the following analytical formulas.

For Case I,
\begin{align}
(\tilde{x}_i)^{-1}=\mu_i\mathcal{Q}^{(i)},\label{x_inv_case_1}
\end{align}
where 
\[
\mu_i=\frac{1}{1-\alpha_i\beta_i}\quad \text{and} \quad\mathcal{Q}^{(i)}=\big(S_{w,12}^{(i)}\big)^\top \big(\tilde{X}^{(i-1)}\big)^{-1}S_{v,12}^{(i)}-(l^{(i)})^\top l^{(i)}-\hat{b}_iR^{-1}c_i.
\]

For Case II,
\begin{align}
(\tilde{x}_i)^{-1}=\frac{1}{2}\mathrm{Re}\Bigg(\Big(\begin{bmatrix}\mu_i+\nu_i&-j(\nu_i-\mu_i)\\j(\nu_i-\mu_i)&\mu_i+\nu_i\end{bmatrix}\otimes I_m\Big) \mathcal{Q}^{(i)} \Big(\begin{bmatrix}1&-j\\j&1\end{bmatrix}\otimes I_m\Big)\Bigg),\label{x_inv_case_2}
\end{align}
where 
\[
\nu_i=\frac{1}{1-\alpha_i\overline{\beta}_{i}}.
\]

For Case III,
\begin{align}
(\tilde{x}_i)^{-1}=\mathrm{Re}\Bigg(\Big(\begin{bmatrix}\mu_i&0\\-\alpha_i\mu_i\gamma_i&\gamma_i\end{bmatrix}\otimes I_m\Big) \mathcal{Q}^{(i)} \Big(\begin{bmatrix}1&-j\\j&1\end{bmatrix}\otimes I_m\Big)\Bigg),\label{x_inv_case_3}
\end{align}
where
\[
\gamma_i=\frac{1}{1-\alpha_i\beta_{i+1}}.
\]

For Case IV,
\begin{align}
(\tilde{x}_i)^{-1}=\mathrm{Re}\Bigg(\Big(\begin{bmatrix}1&j\\-j&1\end{bmatrix}\otimes I_m\Big) \mathcal{Q}^{(i)} \Big(\begin{bmatrix}\mu_i&-\beta_i\mu_i\zeta_i\\0&\zeta_i\end{bmatrix}\otimes I_m\Big)\Bigg),\label{x_inv_case_4}
\end{align}
where
\[
\zeta_i=\frac{1}{1-\alpha_{i+1}\beta_{i}}.
\]
\begin{algorithm}[!h]
\caption{DTNS-RADI}\label{alg1}
\DontPrintSemicolon
\KwIn{
  Matrices of DTNARE \eqref{dtnare}: $E$, $A$, $B$, $C$, $\hat{E}$, $\hat{A}$, $\hat{B}$, $\hat{C}$, $R$; ADI shifts: $\{\alpha_i\}_{i=1}^k$ and $\{\beta_i\}_{i=1}^k$ satisfying $|\alpha_i|>1$ and $|\beta_i|>1$; Tolerance: $\tau\in[0,1]$.
}
\KwOut{
  Approximation of $X$: $X \approx\bar{X}^{(i)}= V^{(i)}\tilde{X}^{(i)}(\hat{W}^{(i)})^\top$; Approximation of gain matrices: $K\approx A\bar{X}^{(i)}\hat{B}(R+C\bar{X}^{(i)}\hat{B})^{-1}$ and $\hat{K}\approx(R+C\bar{X}^{(i)}\hat{B})^{-1}C\bar{X}^{(i)}\hat{A}$; Residual $R_x^{(i)}$: $R_x^{(i)}=B_{\perp}^{(i)} \big(M^{(i)}\big)^{-1} \hat{C}_{\perp}^{(i)}$.
}

\BlankLine
\textbf{Initialization:}
$B_{\perp}^{(0)}=B$, $\hat{C}_{\perp}^{(0)} = \hat{C}$, $V^{(0)}=[\;]$, $\hat{W}^{(0)} = [\;]$, $\tilde{X}^{(0)} = [\;]$, $S_v^{(0)}=[\;]$, $S_w^{(0)}=[\;]$, $L^{(0)}=[\;]$, $\big(S_v^{(0)}\big)^{-1}=[\;]$, $S_{v,12}^{(0)}=[\;]$, $\big(S_w^{(0)}\big)^{-1}=[\;]$, $S_{w,12}^{(0)}=[\;]$, $K_v^{(0)} = \mathbf{0}$, $\hat{K}_w^{(0)} = \mathbf{0}$, $G^{(0)}=R$, $K_1^{(0)}=\mathbf{0}$, $\hat{K}_1^{(0)}=\mathbf{0}$, $M^{(0)}=I_{m}$, $B_r^{(0)}=[\;]$, $C_r^{(0)}=[\;]$, $\hat{B}_r^{(0)}=[\;]$, $\hat{C}_r^{(0)}=[\;]$, $i = 1$.

\BlankLine
\While{$\dfrac{\big\|B_{\perp}^{(i-1)}\big(M^{(i-1)}\big)^{-1} \hat{C}_{\perp}^{(i-1)}\big\|}{\big\|B\hat{C}\big\|} \geq \tau$}{
 Compute \(v_i\) and \(\hat{w}_i\) from \eqref{vw_1}--\eqref{vw_4} according to the case the shifts \((\alpha_i,\beta_i)\) belong to.\label{step1}\\
    Set $s_v^{(i)}$, $s_w^{(i)}$, and $l^{(i)}$ from \eqref{sl1}--\eqref{sl4} according to the case the shifts \((\alpha_i,\beta_i)\) belong to.\\
    Compute $\hat{b}_i=\hat{w}_i^\top \hat{B}$ and $c_i=Cv_i$, and set $S_{v,12}^{(i)}=\tilde{X}^{(i-1)}(S_w^{(i-1)})^{-\top}\big((L^{(i-1)})^\top l^{(i)}+\hat{B}_r^{(i-1)}R^{-1}c_i\big)$ and $S_{w,12}^{(i)}=(\tilde{X}^{(i-1)})^\top(S_v^{(i-1)})^{-\top}\big((L^{(i-1)})^\top l^{(i)}+(C_r^{(i-1)})^\top R^{-\top}\hat{b}_i^\top\big)$.\\ 
    Compute $(\tilde{x}_i)^{-1}$ from \eqref{x_inv_case_1}--\eqref{x_inv_case_4} according to the case the shifts \((\alpha_i,\beta_i)\) belong to.\\
    Set $b_i$ and $\hat{c}_i$ as in \eqref{bch}, and expand $S_v^{(i)}$, $(S_v^{(i)})^{-1}$, $S_w^{(i)}$, $(S_w^{(i)})^{-1}$, $L^{(i)}$, $\tilde{X}^{(i)}$, $V^{(i)}$, $\hat{W}^{(i)}$, $B_r^{(i)}$, $C_r^{(i)}$, $\hat{B}_r^{(i)}$, and $\hat{C}_r^{(i)}$ as in \eqref{big_S_L}, \eqref{expand_1}, and \eqref{free_par}.\\
    Update $B_{\perp}^{(i)}=B_{\perp}^{(i-1)}-Ev_ib_i$, $\hat{C}_{\perp}^{(i)}=\hat{C}_{\perp}^{(i-1)}-\hat{c}_i(\hat{w}_i)^\top\hat{E}$, $M^{(i)}=M^{(i-1)}-\hat{c}_i(\tilde{x}_i)^{-1}b_i$, $K_v^{(i)}=K_v^{(i-1)}+Ev_i\tilde{x}_i(s_w^{(i)})^{-\top}\big(\hat{b}_i-(S_{w,12}^{(i)})^\top (S_w^{(i-1)})^{-\top}\hat{B}_{r}^{(i-1)}\big)R^{-1}$, $\hat{K}_w^{(i)}=\hat{K}_w^{(i-1)}+R^{-1}\big(c_i-C_r^{(i-1)}(S_v^{(i-1)})^{-1}S_{v,12}^{(i)}\big)(s_v^{(i)})^{-1}\tilde{x}_i(\hat{w}_i)^\top \hat{E}$, $G^{(i)}=G^{(i-1)}+c_i\tilde{x}_i\hat{b}_i$, $K_1^{(i)}=K_1^{(i-1)}+Av_i\tilde{x}_i\hat{b}_i$, $\hat{K}_1^{(i)}=\hat{K}_1^{(i-1)}+c_i\tilde{x}_i(\hat{w}_i)^\top\hat{A}$, and $i\gets i+1$.
}
Set $K\approx K_1^{(i)}\big(G^{(i)}\big)^{-1}$ and $\hat{K}\approx \big(G^{(i)}\big)^{-1}\hat{K}_1^{(i)}$.
\end{algorithm}
\subsection{Automatic Shift Generation}
DTNS-RADI can be viewed primarily as a recursive interpolation-based model order reduction (MOR) method that, as a byproduct, also provides an approximation to the DTNARE \eqref{dtnare}. Error analysis for MOR methods whose bases satisfy Sylvester equations of the forms \eqref{Sylv_V}, \eqref{Sylv_V2}, \eqref{Sylv_W}, and \eqref{Sylv_W2} is developed in \cite{wolfthesis} and \cite{panzerthesis}; the present subsection follows that analysis.

Define
\begin{align}
G_{\perp}^{(i)}(z)=C(zE-A)^{-1}B_{\perp}^{(i)}\quad \text{and}\quad \hat{G}_{\perp}^{(i)}(z)=\hat{C}_{\perp}^{(i)}(z\hat{E}-\hat{A})^{-1}\hat{B}.
\end{align}
The approximation errors \(G(z)-G_r^{(i)}(z)\) and \(\hat{G}(z)-\hat{G}_r^{(i)}(z)\) can then be factorized as
\begin{align}
G(z)-G_r^{(i)}(z)&=G_{\perp}^{(i)}(z)\Big(-L^{(i)}\big(zI-A_r^{(i)}\big)^{-1}B_r^{(i)}+I\Big),\\
\hat{G}(z)-\hat{G}_r^{(i)}(z)&=\Big(-\hat{C}_r^{(i)}\big(zI-\hat{A}_r^{(i)}\big)^{-1}(L^{(i)})^\top+I\Big)\hat{G}_{\perp}^{(i)}(z).
\end{align}
Lemma 3.3 of \cite{wolfthesis} shows that these errors vanish when all poles of the realization \((E,A,B_{\perp}^{(i)},C)\) are uncontrollable and all poles of the realization \((\hat{E},\hat{A},\hat{B},\hat{C}_{\perp}^{(i)})\) are unobservable. Thus, DTNS-RADI can be interpreted as a recursive interpolation algorithm that simultaneously addresses two MOR problems. The peaks in the frequency response of \(G(z)\) associated with the most controllable modes, when captured by \(G_r^{(i)}(z)\), are flattened out in \(G_{\perp}^{(i)}(z)\). Although \(G(z)\) and \(G_{\perp}^{(i)}(z)\) have the same poles, once the most controllable poles of \(G(z)\) are flattened out in \(G_{\perp}^{(i)}(z)\), they become poorly controllable in \(G_{\perp}^{(i)}(z)\), and the error \(\|G(z)-G_r^{(i)}(z)\|\) drops significantly.

The \(\mathcal{H}_2\) norm error \(\|G(z)-G_r^{(i)}(z)\|_{\mathcal{H}_2}\) is dominated by the most controllable and observable poles of \(G(z)\). If \(G_r^{(i)}(z)\) interpolates \(G(z)\) at the reciprocals of these poles, the error decreases rapidly \cite{bunse2010h2,gugercin2008h_2}, since the dominant part of the \(\mathcal{H}_2\) norm error \(\|G(z)-G_r^{(i)}(z)\|_{\mathcal{H}_2}\) vanishes with this interpolation. Therefore, the mirror images of the reciprocals of the dominant poles of \(G_{\perp}^{(i)}(z)\) can be used as ADI shifts \(\alpha_i\) in DTNS-RADI to approximate \(G(z)\) accurately, since DTNS-RADI interpolates at the mirror images of the ADI shifts. In the same way, the mirror images of the reciprocals of the dominant poles of \(\hat{G}_{\perp}^{(i)}(z)\) can be used as ADI shifts \(\beta_i\) in DTNS-RADI to approximate \(\hat{G}(z)\) accurately.

In large-scale settings, directly computing the poles of \(G_{\perp}^{(i)}(z)\) and \(\hat{G}_{\perp}^{(i)}(z)\) is infeasible. Instead, Ritz values of \(E^{-1}A\) and \(\hat{A}\hat{E}^{-1}\) can be obtained by projecting onto the interpolation bases accumulated during the DTNS-RADI iterations.

Set
\[
V_{\mathrm{ritz}} = \mathrm{orth}\Big(\begin{bmatrix} v_1 & \cdots & v_i \end{bmatrix}\Big)
\]
with implicit restarting: if the number of columns of \(V_{\mathrm{ritz}}\) exceeds a prescribed limit, the older vectors are discarded and a new basis is accumulated. Restarting mechanisms of this type are commonly used in eigenvalue algorithms to control the basis dimension \cite{saad2011numerical}.

Define the projected matrices
\[
E_{\mathrm{ritz}} = V_{\mathrm{ritz}}^\top E V_{\mathrm{ritz}}, \quad
A_{\mathrm{ritz}} = V_{\mathrm{ritz}}^\top A V_{\mathrm{ritz}}, \quad
B_{\mathrm{ritz}} = V_{\mathrm{ritz}}^\top B_{\perp}^{(i)}.
\]  
Compute the eigenvalue decomposition of \(E_{\mathrm{ritz}}^{-1}A_{\mathrm{ritz}}\) as
\[
E_{\mathrm{ritz}}^{-1}A_{\mathrm{ritz}} = T\,\mathrm{diag}\big(\lambda_1,\dots,\lambda_k\big)T^{-1}.
\]
Define
\[
r_{b,l} = T^{-1}(l,:)\,E_{\mathrm{ritz}}^{-1}B_{\mathrm{ritz}}.
\]
The most controllable pole \(\lambda_{\mathrm{dom}}\) of \(E_{\mathrm{ritz}}^{-1}A_{\mathrm{ritz}}\) is taken as the pole \(\lambda_l\) corresponding to the largest normalized residue, a standard measure in modal analysis \cite{gawronski2004dynamics},
\[
\phi_l = \frac{\|r_{b,l}\|_2^2}{1-|\lambda_l|^2}.
\]  
The next shift is then chosen as
\[
\alpha_{i+1}=-\frac{1}{\lambda_{\mathrm{dom}}}.
\]
If all poles \(\lambda_l\) lie outside or on the unit circle, the previous shift is repeated.

Similarly, set
\[
W_{\mathrm{ritz}} = \mathrm{orth}\Big(\begin{bmatrix} \hat{w}_1 & \cdots & \hat{w}_i \end{bmatrix}\Big)
\]
with implicit restarting. Define the projected matrices
\[
\hat{E}_{\mathrm{ritz}} = W_{\mathrm{ritz}}^\top \hat{E} W_{\mathrm{ritz}}, \quad
\hat{A}_{\mathrm{ritz}} = W_{\mathrm{ritz}}^\top \hat{A} W_{\mathrm{ritz}}, \quad
\hat{C}_{\mathrm{ritz}} = \hat{C}_{\perp}^{(i)} W_{\mathrm{ritz}}.
\]
Compute the eigenvalue decomposition of \(\hat{A}_{\mathrm{ritz}}\hat{E}_{\mathrm{ritz}}^{-1}\):
\[
\hat{A}_{\mathrm{ritz}}\hat{E}_{\mathrm{ritz}}^{-1} = \hat{T}\,\mathrm{diag}\big(\hat{\lambda}_1,\dots,\hat{\lambda}_r\big)\hat{T}^{-1}.
\]
Define
\[
r_{c,l} = \hat{C}_{\mathrm{ritz}}\hat{E}_{\mathrm{ritz}}^{-1} \hat{T}(:,l).
\]
The most observable pole \(\hat{\lambda}_{\mathrm{dom}}\) of \(\hat{A}_{\mathrm{ritz}}\hat{E}_{\mathrm{ritz}}^{-1}\) is taken as the pole \(\hat{\lambda}_l\) corresponding to the largest normalized residue
\[
\hat{\phi}_l = \frac{\|r_{c,l}\|_2^2}{1-|\hat{\lambda}_l|^2}.
\]
The next shift is then chosen as
\[
\beta_{i+1}=-\frac{1}{\hat{\lambda}_{\mathrm{dom}}}.
\]
If all poles \(\hat{\lambda}_l\) lie outside or on the unit circle, the previous shift is repeated.

One difficulty with this shift-generation strategy is that the generated shifts \(\alpha_i\) and \(\beta_i\) often cannot be grouped into the four cases used to keep \(V^{(i)}\), \(\hat{W}^{(i)}\), and \(\tilde{X}^{(i)}\) real-valued. This issue can be avoided by enforcing \(\alpha_i=\beta_i\) and constructing \(V_{\mathrm{ritz}}\) and \(W_{\mathrm{ritz}}\) in alternating iterations. That is, in one iteration \(\alpha_i\) is generated and \(\beta_i\) is set equal to \(\alpha_i\), while in the next iteration \(\beta_i\) is generated and \(\alpha_i\) is set equal to \(\beta_i\). As shown in the numerical section, the choice \(\alpha_i=\beta_i\) generally gives a better approximation than \(\alpha_i\neq \beta_i\). The same behavior was observed in \cite{zulfiqar2025unified} and \cite{zulfiqar2026low} for low-rank ADI algorithms for the Sylvester equation and the CTNARE.
\section{Numerical Results}
This section assesses the numerical performance of DTNS-RADI on two large-scale DTNAREs. MATLAB codes for reproducing the results reported here are publicly available at \cite{mycode}. For the automatic shift generation, the initial shifts are taken as $\alpha_1=\beta_1=2$. The maximum number of columns retained in $V_{\mathrm{ritz}}$ and $W_{\mathrm{ritz}}$ is limited to $10$. The basis $V_{\mathrm{ritz}}$ is constructed when $i$ is odd, while $W_{\mathrm{ritz}}$ is constructed when $i$ is even. Throughout the iterations, the ADI shifts satisfy $\alpha_i=\beta_i$ when automatic shift generation is used. The tolerance is set to $\tau=10^{-8}$. All experiments are carried out in MATLAB R2025b on a Windows 11 laptop equipped with 32 GB RAM and an Intel(R) Core(TM) Ultra 9 285H processor running at 2.9 GHz.
\subsection{Example 1}
The matrices $E\in\mathbb{R}^{10^7\times 10^7}$, $A\in\mathbb{R}^{10^7\times 10^7}$, $B\in\mathbb{R}^{10^7\times 1}$, and $C\in\mathbb{R}^{1\times10^7}$ are the state-space matrices of a discrete-time procedural model generated following the procedure described in \cite{zulfiqar2026}. The pole locations of $E^{-1}A$ are displayed in Figure \ref{fig1}. Of the $20$ complex poles, $10$ are dominant. The remaining complex and real poles are weakly controllable and weakly observable; further details can be found in \cite{zulfiqar2026}.
\begin{figure}[!h]
  \centering
  \includegraphics[width=8cm]{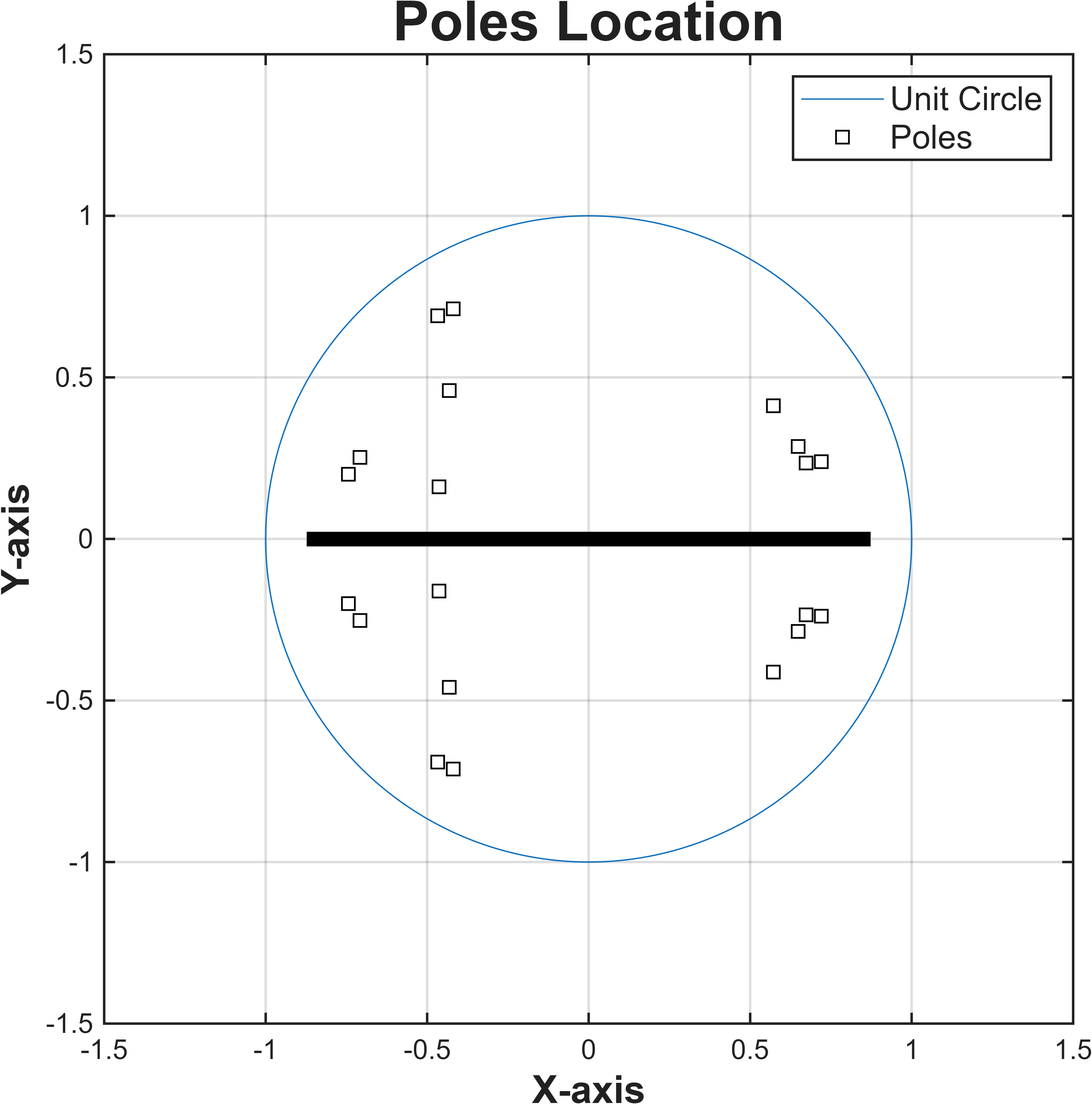}
  \caption{Pole locations of $E^{-1}A$}
  \label{fig1}
\end{figure}

The matrices $\hat{E}\in\mathbb{R}^{10^6\times 10^6}$, $\hat{A}\in\mathbb{R}^{10^6\times 10^6}$, $\hat{B}\in\mathbb{R}^{10^6\times 1}$, and $\hat{C}\in\mathbb{R}^{1\times10^6}$ are the state-space matrices of another discrete-time procedural model generated as described in \cite{zulfiqar2026}. The pole locations of $\hat{A}\hat{E}^{-1}$ are shown in Figure \ref{fig2}. Of the $12$ complex poles, $6$ are dominant. The remaining complex and real poles are weakly controllable and weakly observable \cite{zulfiqar2026}.
\begin{figure}[!h]
  \centering
  \includegraphics[width=8cm]{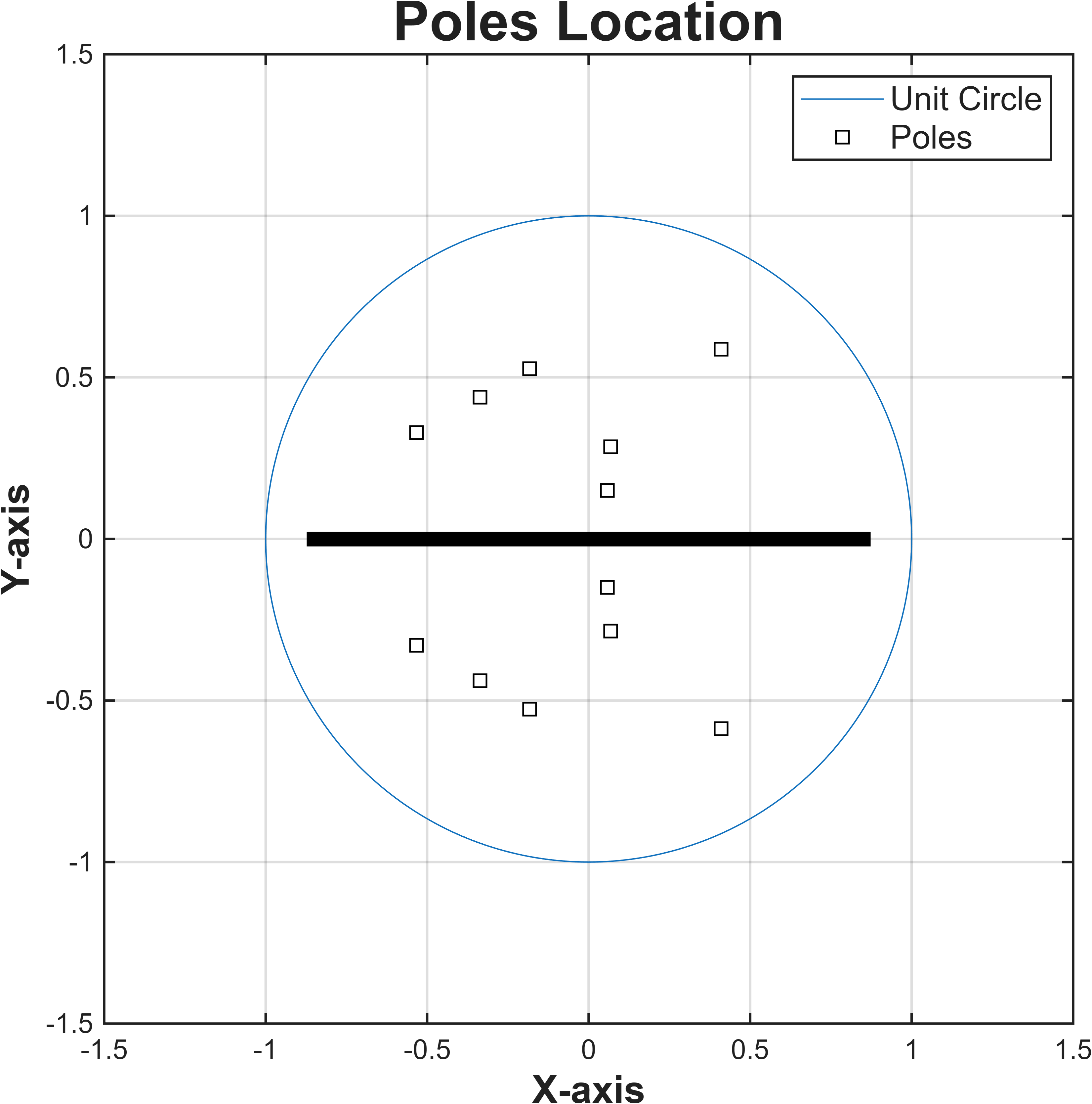}
  \caption{Pole locations of $\hat{A}\hat{E}^{-1}$}
  \label{fig2}
\end{figure}

Four different strategies for selecting the ADI shifts are examined. The first strategy employs the automatic shift generation method introduced in the previous section. The second strategy, referred to as ``Pole-I'', takes the mirror images of the reciprocals of the $50$ poles (including $20$ complex poles) of $E^{-1}A$ as ADI shifts with $\alpha_i=\beta_i$. The third strategy, ``Pole-II'', takes the mirror images of the reciprocals of the $50$ poles (including $12$ complex poles) of $\hat{A}\hat{E}^{-1}$ as ADI shifts with $\alpha_i=\beta_i$. The fourth strategy, ``Pole-III'', assigns the mirror images of the reciprocals of the $50$ poles (including $20$ complex poles) of $E^{-1}A$ to $\alpha_i$ and the mirror images of the reciprocals of the $50$ poles (including $12$ complex poles) of $\hat{A}\hat{E}^{-1}$ to $\beta_i$, i.e., $\alpha_i\neq \beta_i$. The spectral norm of the normalized residual
$\dfrac{\|R_x^{(i)}\|_2}{\|B\hat{C}\|_2}$ is plotted in Figure \ref{fig3}.
\begin{figure}[!h]
  \centering
  \includegraphics[width=10cm]{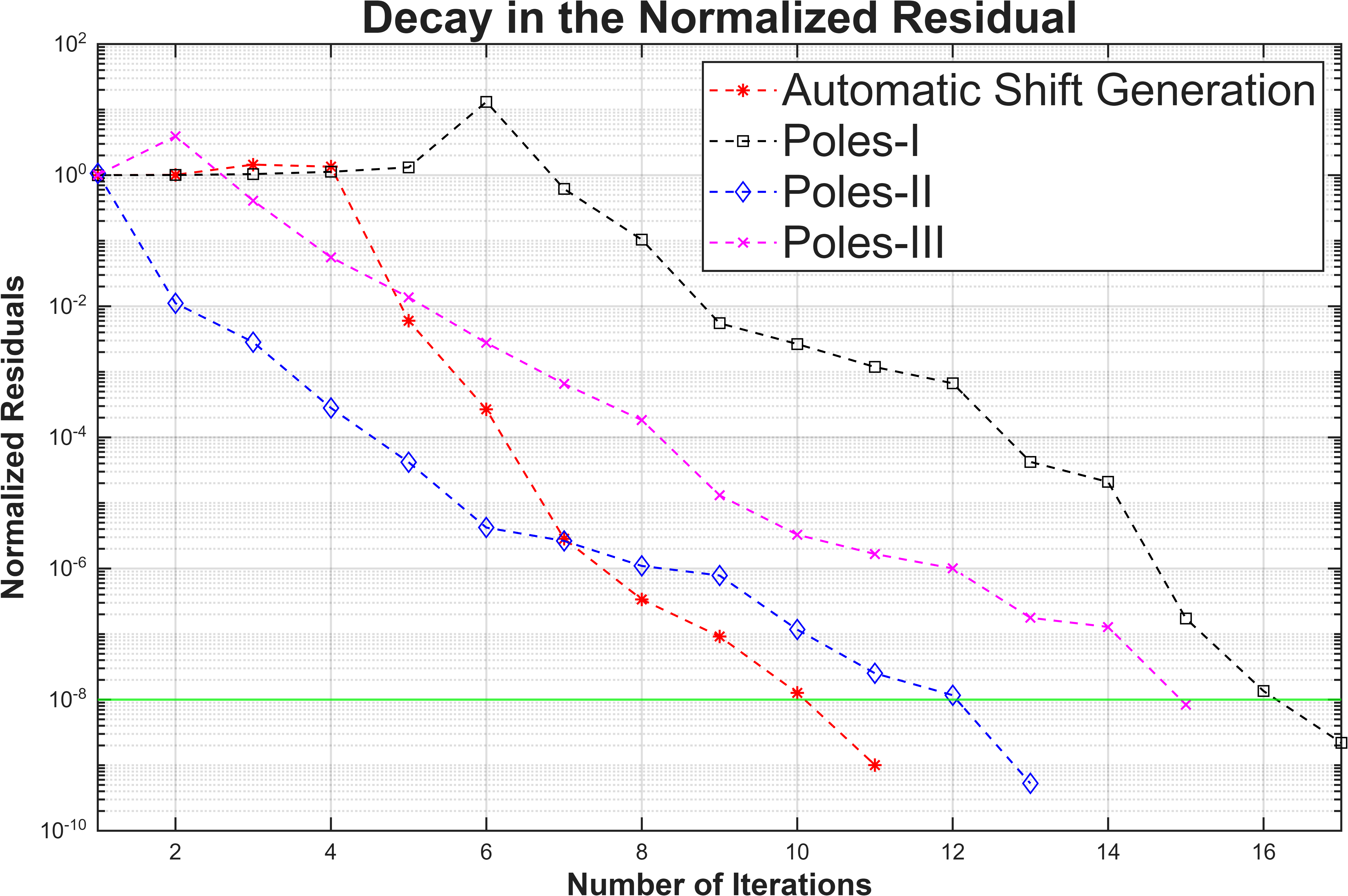}
  \caption{Decay in the normalized residual}
  \label{fig3}
\end{figure}

The automatic shift generation strategy outperforms the other three strategies. The computational times for the four strategies are reported in Table \ref{tab1}. Despite the large scale of the DTNARE matrices, the automatic shift generation produces shifts efficiently. Because it converged quickly, it took less time to execute, even though the other strategy used precomputed shifts and therefore spent no computational time generating shifts.
\begin{table}[!h]
\centering
\caption{Simulation Time Comparison}\label{tab1}
\begin{tabular}{|c|c|}\hline
Shift Generation Strategy & Elapsed Time (sec)\\\hline
Automatic Shift Generation& $67.3347$\\
Pole-I&$84.1151$\\
Pole-II& $71.2687$\\
Pole-III& $73.2912$\\
\hline
\end{tabular}
\end{table}

\subsection{Example 2}

The matrices $E\in\mathbb{R}^{10^6\times 10^6}$, $A\in\mathbb{R}^{10^6\times 10^6}$, $B\in\mathbb{R}^{10^6\times 2}$, and $C\in\mathbb{R}^{3\times10^6}$ are the state-space matrices of a discrete-time procedural model generated following the procedure described in \cite{zulfiqar2026}. The pole locations of $E^{-1}A$ are displayed in Figure \ref{fig4}. Of the $50$ complex poles, $20$ are dominant. The remaining complex and real poles are weakly controllable and weakly observable; further details can be found in \cite{zulfiqar2026}.
\begin{figure}[!h]
  \centering
  \includegraphics[width=8cm]{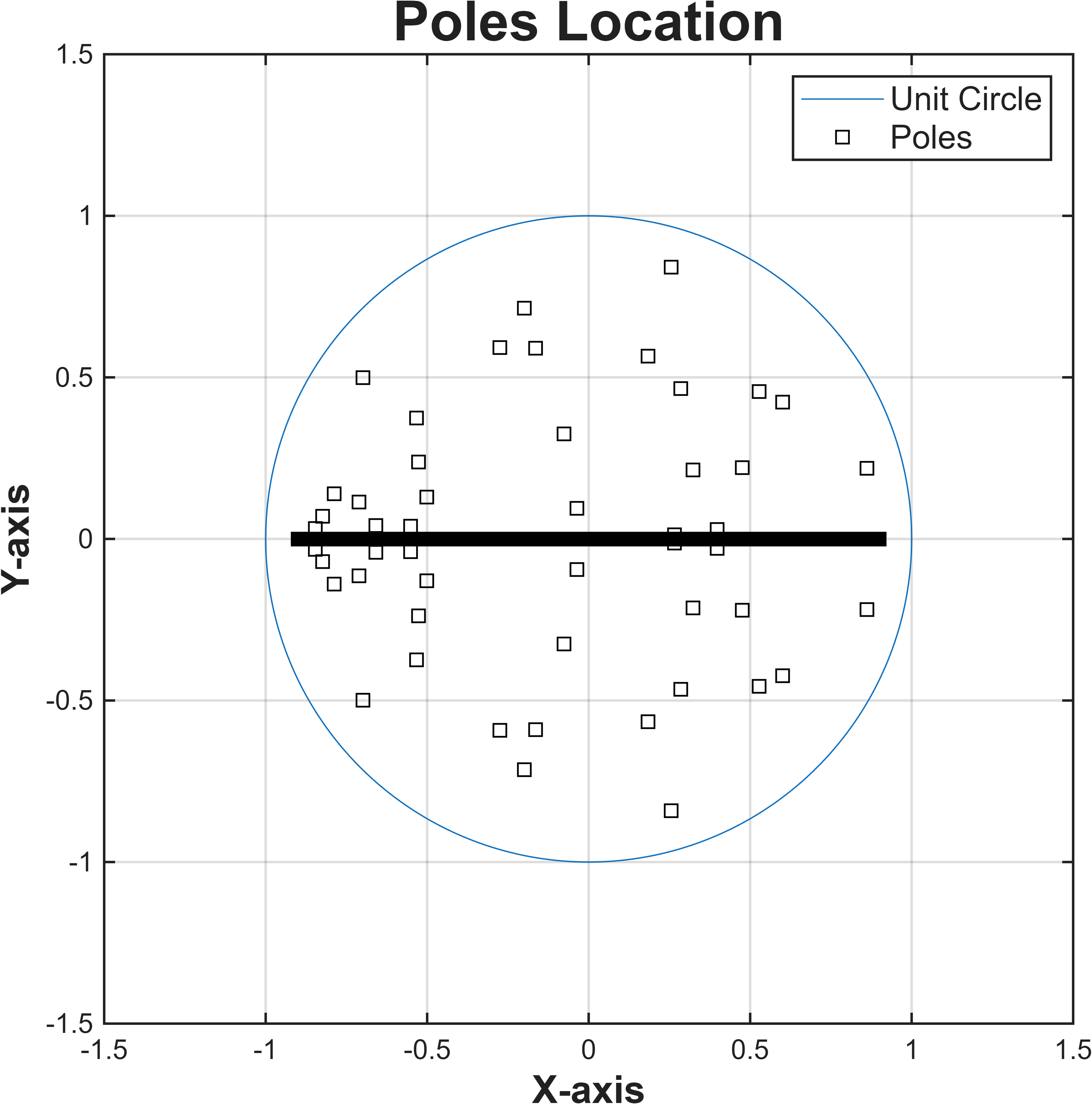}
  \caption{Pole locations of $E^{-1}A$}
  \label{fig4}
\end{figure}

The matrices $\hat{E}\in\mathbb{R}^{10^5\times 10^5}$, $\hat{A}\in\mathbb{R}^{10^5\times 10^5}$, $\hat{B}\in\mathbb{R}^{10^5\times 3}$, and $\hat{C}\in\mathbb{R}^{2\times10^5}$ are the state-space matrices of another discrete-time procedural model generated as described in \cite{zulfiqar2026}. The pole locations of $\hat{A}\hat{E}^{-1}$ are shown in Figure \ref{fig5}. Of the $50$ complex poles, $20$ are dominant. The remaining complex and real poles are weakly controllable and weakly observable \cite{zulfiqar2026}.
\begin{figure}[!h]
  \centering
  \includegraphics[width=8cm]{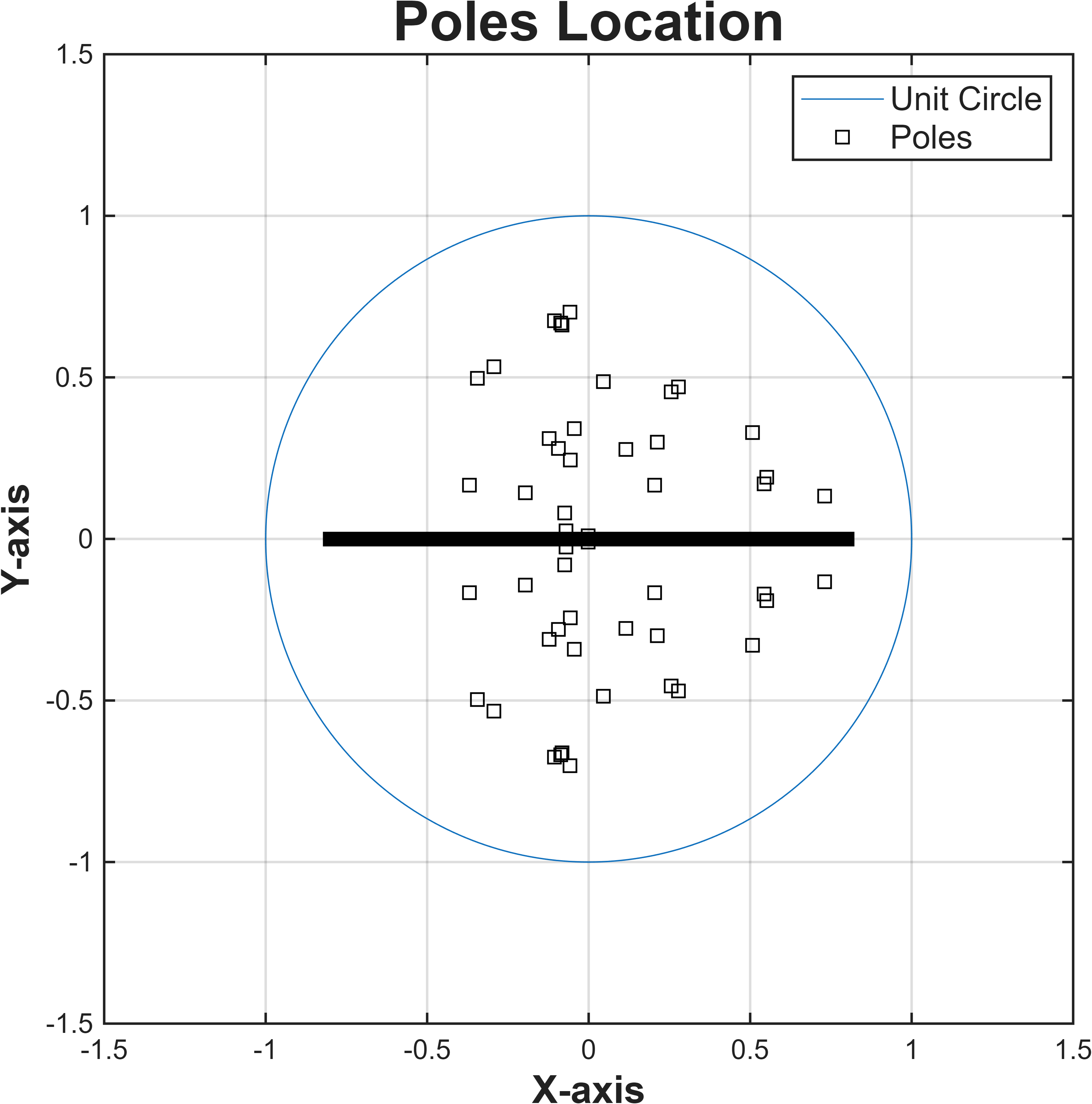}
  \caption{Pole locations of $\hat{A}\hat{E}^{-1}$}
  \label{fig5}
\end{figure}

Four different strategies for selecting the ADI shifts are examined. The first strategy employs the automatic shift generation method introduced in the previous section. The second strategy, ``Pole-I'', takes the mirror images of the reciprocals of the $100$ poles (including $50$ complex poles) of $E^{-1}A$ as ADI shifts with $\alpha_i=\beta_i$. The third strategy, ``Pole-II'', takes the mirror images of the reciprocals of the $100$ poles (including $50$ complex poles) of $\hat{A}\hat{E}^{-1}$ as ADI shifts with $\alpha_i=\beta_i$. The fourth strategy, ``Pole-III'', assigns the mirror images of the reciprocals of the $100$ poles (including $50$ complex poles) of $E^{-1}A$ to $\alpha_i$ and the mirror images of the reciprocals of the $100$ poles (including $50$ complex poles) of $\hat{A}\hat{E}^{-1}$ to $\beta_i$, i.e., $\alpha_i\neq \beta_i$. The spectral norm of the normalized residual $\dfrac{\|R_x^{(i)}\|_2}{\|B\hat{C}\|_2}$ is plotted in Figure \ref{fig6}.
\begin{figure}[!h]
  \centering
  \includegraphics[width=10cm]{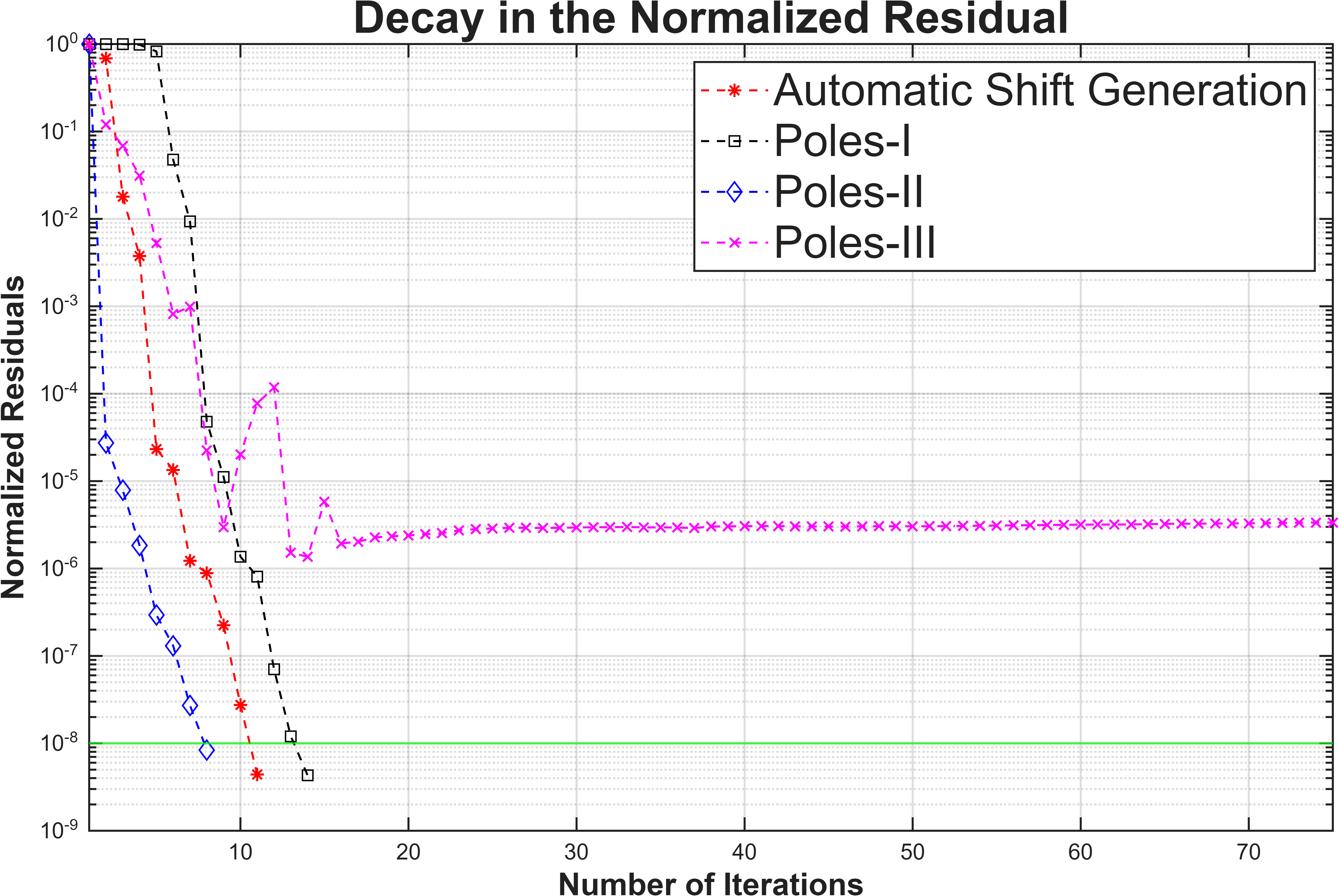}
  \caption{Decay in the normalized residual}
  \label{fig6}
\end{figure}

The automatic shift generation strategy performs well in this example. The Pole-III strategy fails to converge, even though it uses the dominant poles of both $E^{-1}A$ and $\hat{A}\hat{E}^{-1}$. This behavior is consistent with observations made for the Sylvester equation in \cite{zulfiqar2025unified}, where enforcing $\alpha_i=\beta_i$ has been shown to yield significantly better performance in low-rank ADI methods owing to improved interpolatory properties. The computational times for the four strategies are reported in Table \ref{tab2}. Despite the large scale of the DTNARE matrices, the automatic shift generation produces shifts efficiently. Pole-I and Pole-II use precomputed shifts, so they require less execution time. Pole-III, however, did not converge and therefore took considerably more time than the other strategies.
\begin{table}[!h]
\centering
\caption{Simulation Time Comparison}\label{tab2}
\begin{tabular}{|c|c|}\hline
Shift Generation Strategy & Elapsed Time (sec)\\\hline
Automatic Shift Generation& $7.6246$\\
Pole-I&$6.5548$\\
Pole-II& $3.6603$\\
Pole-III& $49.9795$\\
\hline
\end{tabular}
\end{table}
\section{Conclusion}
A low-rank ADI algorithm, DTNS-RADI, has been developed for solving large-scale DTNAREs that admit low-rank solutions. The method is based on the interpretation of low-rank ADI iterations as recursive interpolation schemes. Although the algorithm is projection-based, its pole-placement property guarantees that the projected DTNARE solved at each iteration admits a stabilizing solution. An automatic shift-generation strategy has also been introduced, making the method fully autonomous: once an initial shift is provided, subsequent shifts are generated without user intervention until the residual satisfies a prescribed tolerance. Numerical experiments on two large-scale DTNAREs demonstrate that DTNS-RADI is accurate and efficient, and that the proposed automatic shift strategy outperforms pole-based strategies using pre-computed shifts. These results show that DTNS-RADI is a practical and autonomous solver for computing low-rank stabilizing solutions of large-scale DTNAREs.
\section*{Appendix}

\begin{proof}
First, rearranging \eqref{Sylv_X} gives
\begin{equation}
(S_w^{(i)})^\top(\tilde X^{(i)})^{-1}S_v^{(i)}
-(L^{(i)})^\top L^{(i)}
=(\tilde X^{(i)})^{-1}
+\hat B_r^{(i)}R^{-1}C_r^{(i)}.
\label{eq:SXrearr}
\end{equation}
Premultiplying \eqref{eq:SXrearr} by \(\tilde X^{(i)}(S_w^{(i)})^{-\top}\) yields
\begin{align*}
S_v^{(i)}
-\tilde X^{(i)}(S_w^{(i)})^{-\top}(L^{(i)})^\top L^{(i)}
&=
\tilde X^{(i)}(S_w^{(i)})^{-\top}(\tilde X^{(i)})^{-1}+
\tilde X^{(i)}(S_w^{(i)})^{-\top}\hat B_r^{(i)}R^{-1}C_r^{(i)}.
\end{align*}
Using \eqref{free_par} together with
\[
A_r^{(i)}=S_v^{(i)}-B_r^{(i)}L^{(i)},
\]
this becomes
\begin{equation}
A_r^{(i)}
=
\tilde X^{(i)}(S_w^{(i)})^{-\top}(\tilde X^{(i)})^{-1}
\bigl(I+\tilde X^{(i)}\hat B_r^{(i)}R^{-1}C_r^{(i)}\bigr).
\label{eq:Arfactor}
\end{equation}

Postmultiplying \eqref{eq:SXrearr} by \((S_v^{(i)})^{-1}\tilde X^{(i)}\) gives
\begin{align*}
(S_w^{(i)})^\top
-(L^{(i)})^\top L^{(i)}(S_v^{(i)})^{-1}\tilde X^{(i)}
&=
(\tilde X^{(i)})^{-1}(S_v^{(i)})^{-1}\tilde X^{(i)}+
\hat B_r^{(i)}R^{-1}C_r^{(i)}(S_v^{(i)})^{-1}\tilde X^{(i)}.
\end{align*}
Using \eqref{free_par} together with
\[
\hat A_r^{(i)}=(S_w^{(i)})^\top-(L^{(i)})^\top\hat C_r^{(i)},
\]
this becomes
\begin{equation}
\hat A_r^{(i)}
=
\bigl(I+\hat B_r^{(i)}R^{-1}C_r^{(i)}\tilde X^{(i)}\bigr)
(\tilde X^{(i)})^{-1}(S_v^{(i)})^{-1}\tilde X^{(i)}.
\label{eq:hatArfactor}
\end{equation}

By the Woodbury identity \cite{golub2013matrix}, and using the invertibility of \(R\) and
\(R+C_r^{(i)}\tilde X^{(i)}\hat B_r^{(i)}\),
\begin{align}
I-\tilde X^{(i)}\hat B_r^{(i)}
\bigl(R+C_r^{(i)}\tilde X^{(i)}\hat B_r^{(i)}\bigr)^{-1}
C_r^{(i)}
&=
\bigl(I+\tilde X^{(i)}\hat B_r^{(i)}R^{-1}C_r^{(i)}\bigr)^{-1},
\label{eq:Woodbury1}
\\
I-\hat B_r^{(i)}
\bigl(R+C_r^{(i)}\tilde X^{(i)}\hat B_r^{(i)}\bigr)^{-1}
C_r^{(i)}\tilde X^{(i)}
&=
\bigl(I+\hat B_r^{(i)}R^{-1}C_r^{(i)}\tilde X^{(i)}\bigr)^{-1}.
\label{eq:Woodbury2}
\end{align}
Therefore, using \eqref{eq:Arfactor} and \eqref{eq:Woodbury1},
\begin{equation}
\begin{aligned}
A_{r,cl}^{(i)}
&=
A_r^{(i)}
\Bigl(
I-\tilde X^{(i)}\hat B_r^{(i)}
\bigl(R+C_r^{(i)}\tilde X^{(i)}\hat B_r^{(i)}\bigr)^{-1}
C_r^{(i)}
\Bigr)
\\
&=
A_r^{(i)}
\bigl(I+\tilde X^{(i)}\hat B_r^{(i)}R^{-1}C_r^{(i)}\bigr)^{-1}
\\
&=
\tilde X^{(i)}(S_w^{(i)})^{-\top}(\tilde X^{(i)})^{-1}.
\end{aligned}
\label{eq:Aclsimilarity}
\end{equation}
Similarly, using \eqref{eq:hatArfactor} and \eqref{eq:Woodbury2},
\begin{equation}
\begin{aligned}
\hat A_{r,cl}^{(i)}
&=
\Bigl(
I-\hat B_r^{(i)}
\bigl(R+C_r^{(i)}\tilde X^{(i)}\hat B_r^{(i)}\bigr)^{-1}
C_r^{(i)}\tilde X^{(i)}
\Bigr)
\hat A_r^{(i)}
\\
&=
\bigl(I+\hat B_r^{(i)}R^{-1}C_r^{(i)}\tilde X^{(i)}\bigr)^{-1}
\hat A_r^{(i)}
\\
&=
(\tilde X^{(i)})^{-1}(S_v^{(i)})^{-1}\tilde X^{(i)}.
\end{aligned}
\label{eq:hatAclsimilarity}
\end{equation}
Thus, \(A_{r,cl}^{(i)}\) is similar to \((S_w^{(i)})^{-\top}\), and \(\hat A_{r,cl}^{(i)}\) is similar to \((S_v^{(i)})^{-1}\). Since the eigenvalues of \(S_w^{(i)}\) are \(-\beta_1,\dots,-\beta_i\) and the eigenvalues of \(S_v^{(i)}\) are \(-\alpha_1,\dots,-\alpha_i\), each with multiplicity \(m\), the eigenvalues of \(A_{r,cl}^{(i)}\) are
\[
-\frac{1}{\beta_1},\dots,-\frac{1}{\beta_i},
\]
and the eigenvalues of \(\hat A_{r,cl}^{(i)}\) are
\[
-\frac{1}{\alpha_1},\dots,-\frac{1}{\alpha_i},
\]
each with multiplicity \(m\). Because \(|\alpha_j|>1\) and \(|\beta_j|>1\), both closed-loop matrices are Schur stable.

It remains to verify that \(\tilde X^{(i)}\) satisfies the projected DTNARE \eqref{proj_dtnare}. Evaluating the left-hand side of \eqref{proj_dtnare} at \(\tilde X^{(i)}\) gives
\begin{align*}
&
A_r^{(i)}\tilde X^{(i)}\hat A_r^{(i)}
-\tilde X^{(i)}
-A_r^{(i)}\tilde X^{(i)}\hat B_r^{(i)}
\bigl(R+C_r^{(i)}\tilde X^{(i)}\hat B_r^{(i)}\bigr)^{-1}
C_r^{(i)}\tilde X^{(i)}\hat A_r^{(i)}
+B_r^{(i)}\hat C_r^{(i)}
\\
&=
\Bigl(
A_r^{(i)}
-A_r^{(i)}\tilde X^{(i)}\hat B_r^{(i)}
\bigl(R+C_r^{(i)}\tilde X^{(i)}\hat B_r^{(i)}\bigr)^{-1}
C_r^{(i)}
\Bigr)
\tilde X^{(i)}\hat A_r^{(i)}
-\tilde X^{(i)}
+B_r^{(i)}\hat C_r^{(i)}
\\
&=
A_{r,cl}^{(i)}\tilde X^{(i)}\hat A_r^{(i)}
-\tilde X^{(i)}
+B_r^{(i)}\hat C_r^{(i)}
\\
&=
\tilde X^{(i)}(S_w^{(i)})^{-\top}(\tilde X^{(i)})^{-1}
\tilde X^{(i)}\hat A_r^{(i)}
-\tilde X^{(i)}
+B_r^{(i)}\hat C_r^{(i)}
\\
&=
\tilde X^{(i)}(S_w^{(i)})^{-\top}\hat A_r^{(i)}
-\tilde X^{(i)}
+B_r^{(i)}\hat C_r^{(i)}
\\
&=
\tilde X^{(i)}(S_w^{(i)})^{-\top}
\bigl((S_w^{(i)})^\top-(L^{(i)})^\top\hat C_r^{(i)}\bigr)
-\tilde X^{(i)}
+B_r^{(i)}\hat C_r^{(i)}
\\
&=
\tilde X^{(i)}
-\tilde X^{(i)}(S_w^{(i)})^{-\top}(L^{(i)})^\top\hat C_r^{(i)}
-\tilde X^{(i)}
+B_r^{(i)}\hat C_r^{(i)}
\\
&=0,
\end{align*}
where the final equality follows from
\[
B_r^{(i)}
=
\tilde X^{(i)}(S_w^{(i)})^{-\top}(L^{(i)})^\top.
\]
Thus, \(\tilde X^{(i)}\) is a solution of the projected DTNARE \eqref{proj_dtnare}. Since the corresponding closed-loop matrices are Schur stable, \(\tilde X^{(i)}\) is a stabilizing solution.

For the residual formula, first observe that the recursion for \(M^{(i)}\) follows directly from the block structures
\[
\tilde X^{(i)}=
\begin{bmatrix}
\tilde X^{(i-1)} & 0\\
0 & \tilde x_i
\end{bmatrix},
\qquad
B_r^{(i)}=
\begin{bmatrix}
B_r^{(i-1)}\\
b_i
\end{bmatrix},
\qquad
\hat C_r^{(i)}=
\begin{bmatrix}
\hat C_r^{(i-1)} & \hat c_i
\end{bmatrix}.
\]
Indeed,
\begin{align*}
\hat C_r^{(i)}(\tilde X^{(i)})^{-1}B_r^{(i)}
&=
\hat C_r^{(i-1)}(\tilde X^{(i-1)})^{-1}B_r^{(i-1)}
+
\hat c_i\tilde x_i^{-1}b_i.
\end{align*}
Hence,
\[
M^{(i)}
=
I_m-\hat C_r^{(i)}(\tilde X^{(i)})^{-1}B_r^{(i)}
=
M^{(i-1)}-\hat c_i\tilde x_i^{-1}b_i,
\qquad
M^{(0)}=I_m.
\]

Next, note that
\begin{align*}
B\hat{C}=\big(B_\perp^{(i)}+EV^{(i)}B_r^{(i)}\big)
\big(\hat C_\perp^{(i)}+\hat C_r^{(i)}(\hat W^{(i)})^\top\hat E\big).
\end{align*}
Substituting this into \eqref{residual} gives
\begin{align*}
R_x^{(i)}
&=
EV^{(i)}
\Bigl[
A_r^{(i)}\tilde X^{(i)}\hat A_r^{(i)}
-\tilde X^{(i)}
-A_r^{(i)}\tilde X^{(i)}\hat B_r^{(i)}
\bigl(R+C_r^{(i)}\tilde X^{(i)}\hat B_r^{(i)}\bigr)^{-1}
C_r^{(i)}\tilde X^{(i)}\hat A_r^{(i)}
+B_r^{(i)}\hat C_r^{(i)}
\Bigr]
(\hat W^{(i)})^\top\hat E
\\
&\quad+
EV^{(i)}
\Bigl[
-A_r^{(i)}\tilde X^{(i)}(L^{(i)})^\top
+A_r^{(i)}\tilde X^{(i)}\hat B_r^{(i)}
\bigl(R+C_r^{(i)}\tilde X^{(i)}\hat B_r^{(i)}\bigr)^{-1}
C_r^{(i)}\tilde X^{(i)}(L^{(i)})^\top
+B_r^{(i)}
\Bigr]
\hat C_\perp^{(i)}
\\
&\quad+
B_\perp^{(i)}
\Bigl[
-L^{(i)}\tilde X^{(i)}\hat A_r^{(i)}
+L^{(i)}\tilde X^{(i)}\hat B_r^{(i)}
\bigl(R+C_r^{(i)}\tilde X^{(i)}\hat B_r^{(i)}\bigr)^{-1}
C_r^{(i)}\tilde X^{(i)}\hat A_r^{(i)}
+\hat C_r^{(i)}
\Bigr]
(\hat W^{(i)})^\top\hat E
\\
&\quad+
B_\perp^{(i)}
\Bigl[
I_m
+L^{(i)}\tilde X^{(i)}(L^{(i)})^\top
-L^{(i)}\tilde X^{(i)}\hat B_r^{(i)}
\bigl(R+C_r^{(i)}\tilde X^{(i)}\hat B_r^{(i)}\bigr)^{-1}
C_r^{(i)}\tilde X^{(i)}(L^{(i)})^\top
\Bigr]
\hat C_\perp^{(i)}.
\end{align*}

The first bracket vanishes because \(\tilde X^{(i)}\) satisfies the projected DTNARE \eqref{proj_dtnare}.

For the second bracket, apply the Woodbury identity \cite{golub2013matrix} in the form
\[
I-\hat B_r^{(i)}
\bigl(R+C_r^{(i)}\tilde X^{(i)}\hat B_r^{(i)}\bigr)^{-1}
C_r^{(i)}\tilde X^{(i)}
=
\bigl(I+\hat B_r^{(i)}R^{-1}C_r^{(i)}\tilde X^{(i)}\bigr)^{-1}.
\]
Then the second bracket becomes
\[
-A_r^{(i)}\tilde X^{(i)}
\bigl(I+\hat B_r^{(i)}R^{-1}C_r^{(i)}\tilde X^{(i)}\bigr)^{-1}
(L^{(i)})^\top
+B_r^{(i)}.
\]
Using the factorization
\[
A_r^{(i)}
=
\tilde X^{(i)}(S_w^{(i)})^{-\top}(\tilde X^{(i)})^{-1}
\bigl(I+\tilde X^{(i)}\hat B_r^{(i)}R^{-1}C_r^{(i)}\bigr),
\]
together with
\[
\tilde X^{(i)}
\bigl(I+\hat B_r^{(i)}R^{-1}C_r^{(i)}\tilde X^{(i)}\bigr)^{-1}
=
\bigl(I+\tilde X^{(i)}\hat B_r^{(i)}R^{-1}C_r^{(i)}\bigr)^{-1}
\tilde X^{(i)},
\]
one obtains
\[
A_r^{(i)}\tilde X^{(i)}
\bigl(I+\hat B_r^{(i)}R^{-1}C_r^{(i)}\tilde X^{(i)}\bigr)^{-1}
=
\tilde X^{(i)}(S_w^{(i)})^{-\top}.
\]
Thus, the second bracket is
\[
-\tilde X^{(i)}(S_w^{(i)})^{-\top}(L^{(i)})^\top
+B_r^{(i)}
=0,
\]
because
\[
B_r^{(i)}=\tilde X^{(i)}(S_w^{(i)})^{-\top}(L^{(i)})^\top.
\]

For the third bracket, the same Woodbury identity gives
\[
-L^{(i)}\tilde X^{(i)}
\bigl(I+\hat B_r^{(i)}R^{-1}C_r^{(i)}\tilde X^{(i)}\bigr)^{-1}
\hat A_r^{(i)}
+\hat C_r^{(i)}.
\]
Using the factorization
\[
\hat A_r^{(i)}
=
\bigl(I+\hat B_r^{(i)}R^{-1}C_r^{(i)}\tilde X^{(i)}\bigr)
(\tilde X^{(i)})^{-1}(S_v^{(i)})^{-1}\tilde X^{(i)},
\]
it follows that
\[
\bigl(I+\hat B_r^{(i)}R^{-1}C_r^{(i)}\tilde X^{(i)}\bigr)^{-1}
\hat A_r^{(i)}
=
(\tilde X^{(i)})^{-1}(S_v^{(i)})^{-1}\tilde X^{(i)}.
\]
Therefore, the third bracket becomes
\[
-L^{(i)}\tilde X^{(i)}
(\tilde X^{(i)})^{-1}(S_v^{(i)})^{-1}\tilde X^{(i)}
+\hat C_r^{(i)}
=
-L^{(i)}(S_v^{(i)})^{-1}\tilde X^{(i)}
+\hat C_r^{(i)}
=0,
\]
because
\[
\hat C_r^{(i)}=L^{(i)}(S_v^{(i)})^{-1}\tilde X^{(i)}.
\]

It remains to simplify the fourth bracket. By the Woodbury identity,
\[
I_m
+L^{(i)}\tilde X^{(i)}(L^{(i)})^\top
-L^{(i)}\tilde X^{(i)}\hat B_r^{(i)}
\bigl(R+C_r^{(i)}\tilde X^{(i)}\hat B_r^{(i)}\bigr)^{-1}
C_r^{(i)}\tilde X^{(i)}(L^{(i)})^\top
\]
is equal to
\[
I_m+
L^{(i)}\tilde X^{(i)}
\bigl(I+\hat B_r^{(i)}R^{-1}C_r^{(i)}\tilde X^{(i)}\bigr)^{-1}
(L^{(i)})^\top.
\]
This matrix is \((M^{(i)})^{-1}\). Using
\[
M^{(i)}
=
I_m-\hat C_r^{(i)}(\tilde X^{(i)})^{-1}B_r^{(i)}
=
I_m-
L^{(i)}(S_v^{(i)})^{-1}\tilde X^{(i)}(S_w^{(i)})^{-\top}(L^{(i)})^\top,
\]
a direct multiplication gives
\begin{align*}
&\bigl(I_m-\hat C_r^{(i)}(\tilde X^{(i)})^{-1}B_r^{(i)}\bigr)
\Bigl[
I_m+
L^{(i)}\tilde X^{(i)}
\bigl(I+\hat B_r^{(i)}R^{-1}C_r^{(i)}\tilde X^{(i)}\bigr)^{-1}
(L^{(i)})^\top
\Bigr]
\\
&=
I_m
+L^{(i)}\tilde X^{(i)}
\bigl(I+\hat B_r^{(i)}R^{-1}C_r^{(i)}\tilde X^{(i)}\bigr)^{-1}
(L^{(i)})^\top
\\
&\quad-
L^{(i)}(S_v^{(i)})^{-1}\tilde X^{(i)}(S_w^{(i)})^{-\top}(L^{(i)})^\top
\\
&\quad-
L^{(i)}(S_v^{(i)})^{-1}\tilde X^{(i)}(S_w^{(i)})^{-\top}
(L^{(i)})^\top
L^{(i)}\tilde X^{(i)}
\bigl(I+\hat B_r^{(i)}R^{-1}C_r^{(i)}\tilde X^{(i)}\bigr)^{-1}
(L^{(i)})^\top
\\
&=
I_m
+L^{(i)}
\Bigl[
\tilde X^{(i)}
\bigl(I+\hat B_r^{(i)}R^{-1}C_r^{(i)}\tilde X^{(i)}\bigr)^{-1}
-(S_v^{(i)})^{-1}\tilde X^{(i)}(S_w^{(i)})^{-\top}
\\
&\qquad\qquad-
(S_v^{(i)})^{-1}\tilde X^{(i)}(S_w^{(i)})^{-\top}
(L^{(i)})^\top L^{(i)}\tilde X^{(i)}
\bigl(I+\hat B_r^{(i)}R^{-1}C_r^{(i)}\tilde X^{(i)}\bigr)^{-1}
\Bigr]
(L^{(i)})^\top.
\end{align*}
The expression inside the brackets is zero. Indeed, since
\[
A_r^{(i)}
=
S_v^{(i)}-B_r^{(i)}L^{(i)}
=
S_v^{(i)}-
\tilde X^{(i)}(S_w^{(i)})^{-\top}(L^{(i)})^\top L^{(i)}
\]
and
\[
A_r^{(i)}\tilde X^{(i)}
\bigl(I+\hat B_r^{(i)}R^{-1}C_r^{(i)}\tilde X^{(i)}\bigr)^{-1}
=
\tilde X^{(i)}(S_w^{(i)})^{-\top},
\]
one has
\[
\Bigl(
S_v^{(i)}-
\tilde X^{(i)}(S_w^{(i)})^{-\top}(L^{(i)})^\top L^{(i)}
\Bigr)
\tilde X^{(i)}
\bigl(I+\hat B_r^{(i)}R^{-1}C_r^{(i)}\tilde X^{(i)}\bigr)^{-1}
=
\tilde X^{(i)}(S_w^{(i)})^{-\top}.
\]
Premultiplying by \((S_v^{(i)})^{-1}\) yields
\[
\tilde X^{(i)}
\bigl(I+\hat B_r^{(i)}R^{-1}C_r^{(i)}\tilde X^{(i)}\bigr)^{-1}
-(S_v^{(i)})^{-1}\tilde X^{(i)}(S_w^{(i)})^{-\top}
\]
\[
-
(S_v^{(i)})^{-1}\tilde X^{(i)}(S_w^{(i)})^{-\top}
(L^{(i)})^\top L^{(i)}\tilde X^{(i)}
\bigl(I+\hat B_r^{(i)}R^{-1}C_r^{(i)}\tilde X^{(i)}\bigr)^{-1}
=0.
\]
Hence,
\[
\bigl(I_m-\hat C_r^{(i)}(\tilde X^{(i)})^{-1}B_r^{(i)}\bigr)
\Bigl[
I_m+
L^{(i)}\tilde X^{(i)}
\bigl(I+\hat B_r^{(i)}R^{-1}C_r^{(i)}\tilde X^{(i)}\bigr)^{-1}
(L^{(i)})^\top
\Bigr]
=I_m.
\]
Since the matrices are square, the fourth bracket is the inverse of
\[
M^{(i)}=I_m-\hat C_r^{(i)}(\tilde X^{(i)})^{-1}B_r^{(i)}.
\]

Consequently,
\[
R_x^{(i)}
=
B_\perp^{(i)}
\bigl(M^{(i)}\bigr)^{-1}
\hat C_\perp^{(i)}.
\]

Finally, the Petrov--Galerkin condition holds because
\[
\hat C_\perp^{(i)}\hat V^{(i)}
=
\hat C\hat V^{(i)}
-\hat C_r^{(i)}(\hat W^{(i)})^\top\hat E\hat V^{(i)}
=
\hat C_r^{(i)}-\hat C_r^{(i)}
=0,
\]
and therefore
\[
(W^{(i)})^\top R_x^{(i)}\hat V^{(i)}
=
(W^{(i)})^\top
B_\perp^{(i)}
\bigl(M^{(i)}\bigr)^{-1}
\hat C_\perp^{(i)}\hat V^{(i)}
=0.
\]

For the last statement, consider
\begin{align*}
& A V^{(i)} - E V^{(i)} A_r^{(i)} + B_{\perp}^{(i)} L^{(i)} \\
&= A V^{(i)} - E V^{(i)} \big(S_v^{(i)} - B_r^{(i)} L^{(i)}\big) + \big(B - E V^{(i)} B_r^{(i)}\big) L^{(i)} \\
&= 0.
\end{align*}
This establishes \eqref{Sylv_V2}. By duality, the same argument shows that \(\hat{W}^{(i)}\) satisfies \eqref{Sylv_W2}.

This completes the proof.
\end{proof}

\end{document}